\def\r{\mathbb{R}}
\def\H{\mathcal{H}}
\def\y{\mathcal{Y}}
\def\C{\mathbb{C}}
\def\<{\langle}
\def\>{\rangle}
\def\D{\boldsymbol{\mathcal{D}}}
\def\2{L^2}
\def\c{L^2([0,T];H^1(0,1))}
\def\i{\infty}
\def\h{\hat}
\def\v{\vec}
\def\lam{\lambda}
\def\ds{\displaystyle}
\newtheorem{thm}{\bf Theorem}[section]
\newtheorem{defn}[thm]{\bf Definition}
\newtheorem{lem}[thm]{\bf Lemma}
\newtheorem{prop}[thm]{\bf{Proposition}}
\newcommand{\rinfty}{\int ^{\infty}_{-\infty}}
\title{Well-posedness of  a Class of  Non-homogeneous Boundary Value Problems of
 the Korteweg-de Vries  Equation on a Finite Domain}
\author{
 Eugene Kramer
\\ \small{Department of Mathematics, Physics,
and Computer Science} \\ \small{University of Cincinnati, Raymond
Walters College}\\ \small{Cincinnati, Ohio 45236} \\
\small{eugene.f.kramer@uc.edu }\\ \qquad \vspace{.2in}
\\Ivonne Rivas\\
{\small Department of Mathematical Sciences,} \  \\
{\small University of Cincinnati } \\ {\small Cincinnati, Oh 45221} \\
{\small
email: rivasie@mail.uc.edu} \\
\quad \vspace{.2in}\\ Bing-Yu Zhang \\ {\small Department of
Mathematical Sciences} \\ {\small University of Cincinnati}
\\ {\small Cincinnati,
Ohio 45221} \\ {\small email: zhangb@ucmail.uc.edu}}
\date{}
\begin{document}

\maketitle
\newpage
\begin{abstract}
In this paper, we  study  a class of  initial and boundary value
problems   proposed by Colin and Ghidalia for the Korteweg-de Vries
equation posed on a bounded domain $(0,L)$.  We show that the
initial-value problem is locally well-posed in the classical Sobolev
space  $H^s(0,L)$ for $s>-\frac34$, which provides a positive answer
to one of the open questions of Colin and Ghidalia \cite{ColGhi01}.
\end{abstract}

\section{Introduction}
In  this paper we study a class of  initial-boundary value problem
(IBVP)  for  the Korteweg-de Vries (KdV) equation posed on a finite
domain with nonhomogeneous boundary conditions,
    \begin{equation}\label{PP}
        \begin{cases}
        u_t+u_x+u_{xxx}+uu_x=0,\qquad u(x,0)=\phi(x),  & x\in (0,L),  \ t\in
        \r^+ ,
        \\
        u(0,t)=h_1(t),\ u_x(L,x)=h_2(t),\ u_{xx}(L,t)=h_3(t).
        \end{cases}
    \end{equation}
This IBVP can be considered as a  model for propagation of surface
water waves in the situation where a wave-maker is putting energy in
a finite-length channel from the left $(x=0)$ while the right end
$(x=L)$ of the channel is free (corresponding the case of
$h_2=h_3=0$) (see \cite{c-g-1}). The problem was first proposed and
studied by Colin and Ghidaglia in the late 1990s
 \cite{c-g-1,c-g-2, ColGhi01}. In particular, they investigated the well-posedness of the IBVP in the
classical Sobolev space $H^s (0,L)$ and  obtained the following
results.

\medskip
\noindent {\bf Theorem A} (\cite{ColGhi01})

\begin{itemize} \item[(i)]  \emph{Given
$h_j\in C^1([0, \infty)), \ j=1,2,3$ and $\phi \in H^1 (0,L)$
satisfying $h_1(0)=\phi (0)$, there exists a $T>0$ such that the
IBVP (\ref{PP}) admits a solution (in the sense of distribution)}
\[ u\in L^{\infty}(0,T; H^1(0,L))\cap C([0,T]; L^2 (0,L)) .\]

\item[(ii)] \emph{The solution $u$ of the IBVP
(\ref{PP}) exists globally in $H^1(0,L)$ if the size of its initial
value $\phi \in H^1 (0,L)$ and its boundary values $h_j\in C^1([0,
\infty )), \ j=1,2,3$ are all small.}
\end{itemize}
In addition, they showed that the associate linear IBVP
\begin{equation}\label{LPP}
        \begin{cases}
        u_t+u_x+u_{xxx}=0,\qquad u(x,0)=\phi(x)  & x\in (0,L),  \ t\in \r^+ \\
        u(0,t)=0,\ u_x(L,x)=0,\ u_{xx}(L,t)=0
        \end{cases}
    \end{equation}
 possesses the following  smoothing property:

 \smallskip
\emph{ For any $\phi \in L^2 (0,L)$, the linear IBVP (\ref{LPP})
admits a
 unique solution $$u\in C(\r^+; L^2 (0,L))\cap L^2 _{loc} (\r^+; H^1
 (0,L)).$$}

 \smallskip
 \noindent
 Aided by this smoothing property, Colin and Ghidaglia  showed
 that the homogeneous IBVP (\ref{PP}) is locally well-posed in the
 space $L^2 (0,L)$.

 \medskip
 \noindent
 {\bf Theorem B} (\cite{ColGhi01})

 \quad

 \emph{Assuming $h_1=h_2=h_3\equiv 0$, then for any $\phi \in L^2
(0,L)$, there exists a $T>0$ such that the IBVP (\ref{PP}) admits a
unique weak solution $u\in C([0,T]; L^2 (0,L))\cap L^2 (0,T; H^1
(0,L))$.}

\medskip

The well-posedness results presented in Theorem A are not in the
full strength of the well-posedness in the sense of Hadamard since
both uniqueness and continuous dependence are missing, in
particular,  for  the IBVP (\ref{PP}) with   nonhomogeneous boundary
conditions. To encourage further investigation, a series of open
problems were proposed  by Colin and Ghidaglia in \cite{ColGhi01},
of which, two of them are listed below.

\bigskip
\noindent {\bf Problems}

\begin{itemize}
\item[(1)] \emph{Is it possible to prove  global existence of solutions of
(\ref{PP}) for e.g. smooth solutions (as it in the case for both
quarter plane and the whole line cases)?}

It is remarked by Colin and Ghidaglia in \cite{ColGhi01}: ``
\emph{for these problems, uniqueness rely on a priori estimate in
$H^2$ that we are not able to extend here and therefore establish
the existence of more regular solutions.}"

\item[(2)] \emph{Is it possible to establish the existence of solutions
of (\ref{PP}) with their initial value in
the space $H^s (0,L) $ for some $s<0$ as in the case of the whole
line?}

\smallskip
 Colin and Ghidaglia expected the answer to be positive because of the
 the strong  smoothing property of the associated linear IBVP
 (\ref{LPP}).

\end{itemize}
\newcommand{\norm}[1]{\| #1\|}

In this paper, we will  continue Colin and Ghidalia's work
\cite{c-g-1, c-g-2, ColGhi01} to study the well-posedness problem of
the IBVP (\ref{PP}) in the space $H^s (0,L)$.   We aim at 1)
establishing the well-posedness of the IBVP (\ref{PP}) in the full
strength of Hadamard including \emph{existence, uniqueness and
continuous dependence} and 2) showing that the IBVP (\ref{PP}) is
(locally) well-posed in the space $H^s (0,L)$ when $s\geq 0$  and
$-\frac34 < s< 0$.

  \medskip
  In order to describe precisely our results, we
introduce the some notations.

\smallskip
 For given $T>0$ and $s\in \mathbb{R}$, let
\[ \mathbb{H}^s(0,T) := H^{\frac{s+1}{3} }(0,T) \times
H^{\frac{s}{3}}(0,T)\times H^{\frac{s-1}{3}} (0,T), \]
\[ D_{s,T} := H^s(0,L) \times \mathbb{H}^s (0,T) \]
and \[ Z_{s,T}= C([0,T]; H^s (0,L))\cap L^2 (0,T; H^{s+1}(0,L))\]
 For
the well-posedness of the IBVP (\ref{PP}), we intend to establish in
this paper,  some compatibility conditions relating the initial
datum $\phi(x) $ and the boundary data $h_j(t), j=1,2,3$ are needed.
A simple computation shows that if $u $ is a $C^{\infty}$-smooth
solution of the IBVP (\ref{PP}), then its initial data $u(x,0) =
\phi (x)$ and its boundary values $h_j(t), \ j=1,2,3$ must satisfy
the following compatibility conditions:
\begin{equation}
 \phi _k (0 ) = h^{(k)}_1 (0), \quad \phi _k' (L  ) = h^{(k)}_2 (0),
 \quad
\phi _k'' (L ) = h^{(k)}_3  (0) \label{y5-1}
 \end{equation}
for $k=0 \ , 1, \, \cdots , $ where $h_j^{(k)} (t) $ is the $k-$th
order derivative of $h_j $ and
\begin{equation}
\left \{ \begin{array}{l}
\phi _0 (x) = \phi (x) \\ \\
\phi _k (x) = -\left (\phi _{k-1}^{'''} (x) + \phi _{k-1}^{'}(x)  +
 \sum _{j=0}^{k-1} \left (\phi _{j}(x)
\phi _{k-j-1} (x) \right )' \right ) \end{array} \right.
\label{y6-1}
\end{equation}
for $k=1, \, 2, \cdots .$   When the well-posedness of (\ref{PP}) is
considered in the space $H^s (0,L)$ for some $s\geq 0$, the
following $s-$compatibility conditions thus arise naturally.
\begin{defn}($s-$compatibility )  Let $T>0 $
and $s\geq 0$ be given. A four-tuple $$(\phi , \vec{h})= (\phi , h_1
, \, h_2 ,\, h_3  )\in  D_{s,T} $$ is said to be $s-$compatible with
respect to the IBVP (\ref{PP}) if
 \begin{equation}\label{comp1}\phi_k(0)=h^{(k)}_1(0)
\end{equation}
when $k=0,1,\ldots [s/3]-1$ and $s-3[s/3]\le 1/2,$
\begin{equation}\label{comp2}\phi_k(0)=h^{(k)}_1(0), \quad \phi_k '(1)=h_2^{(k)}(0)\end{equation}
when $k=0,1,\ldots [s/3]$ and $1/2 < s-3[s/3]\le 3/2$ and
  \begin{equation}\label{comp3}
\phi_k(0)=h_1^{(k)}(0), \quad \phi_k '(1)=h_2^{(k)}(0) \quad \phi_k
''(1)=h_3^{(k)}(0) \end{equation} when $k=0,1,\ldots [s/3]+1$ and
$3/2 \le s-3[s/3]\le 9/2.$
We  adopt the convention that (\ref{comp1}) is vacuous if $[s/3]-1< 0$.\\
\end{defn}

As one of the main results in this paper, the following theorem
states that the IBVP (\ref{PP}) is locally well-posed in the space
$H^s (0,L)$ for any $s\geq0$.
\begin{thm}  Let $s\geq 0 $,  $T>0$ and $r>0$  be given with
\[ s\ne \frac{2j-1}{2}, \quad j=1,2,\cdots .\]
 There exists
 a $T^*\in (0,T]$ such that for any  $s-$compatible
 \[ (\phi, \vec{h} )\in D_{s,T} \] satisfying \[ \| (\phi ,
\vec{h})\|_{D_{s,T}}  \leq r, \] the IBVP (\ref{PP}) admits a unique
solution \[ u\in   C([0,T^*]; H^s (0,L))\cap L^2 (0,T^*; H^{s+1}
(0,L)).\] Moreover, the corresponding solution map is Liptschitz
continuous \footnote{The solution map, is in fact, real analytic
(cf. \cite{zh2,zh3,Zhang92}}.
\end{thm}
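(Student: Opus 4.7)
The plan is to recast (\ref{PP}) as a fixed-point equation in the space $Z_{s,T^*}$ for some $T^*\in(0,T]$ chosen small depending on $r$. Define an operator $\Gamma$ by sending $u$ to the solution $w=\Gamma(u)$ of the \emph{linear} nonhomogeneous IBVP
\[
\begin{cases} w_t+w_x+w_{xxx}=-(uu_x), \quad x\in(0,L),\ t\in(0,T^*), \\ w(x,0)=\phi(x), \\ w(0,t)=h_1(t),\ w_x(L,t)=h_2(t),\ w_{xx}(L,t)=h_3(t). \end{cases}
\]
I would verify that, for $T^*$ small, $\Gamma$ maps a closed ball of $Z_{s,T^*}$ into itself and is a contraction. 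This reduces the proof to two independent tasks: (i) sharp a priori estimates for the linear IBVP above in the scale $Z_{s,T}$ driven by $(\phi,\vec h)\in D_{s,T}$ and a suitable forcing, and (ii) a bilinear estimate controlling $uu_x$ in the corresponding ``source norm''.

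\textbf{Linear theory.} I would split the linear solution into three pieces: $W_0(t)\phi$ coming from the initial data, $W_b(t)\vec h$ coming from the boundary data, and a Duhamel term $W_f f$ coming from the forcing $f=-uu_x$. For $W_0$ one extends the $L^2$ Kato-type smoothing implicit in Theorem~B (applied to (\ref{LPP})) from $s=0$ to general $s\ge 0$ by differentiating the equation in $x$, the iterated profiles $\phi_k$ of (\ref{y6-1}) governing the induced boundary traces; Definition~1.1 is precisely what forces these traces to lie in the correct Sobolev classes. For $W_b$ one uses the explicit Laplace-transform representation and solves the associated cubic characteristic equation in $x$, then performs a contour deformation to obtain
\[ \norm{W_b(t)\vec h}_{Z_{s,T}}\ \lesssim\ \norm{\vec h}_{\mathbb{H}^s(0,T)}. \]
The Duhamel piece is treated by combining these two estimates with a Kato smoothing of $W_0$ applied to $f$ and a trace control of $W_b$ applied to the boundary values of the Duhamel of $f$.

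\textbf{Bilinear estimate and contraction.} The second task is a bilinear estimate of the form
\[ \Big\| \int_0^{\cdot}W_0(\cdot-\tau)(u v_x)(\tau)\,d\tau\Big\|_{Z_{s,T}}\ \lesssim\ T^{\theta}\norm{u}_{Z_{s,T}}\norm{v}_{Z_{s,T}} \]
for some $\theta>0$. For $s\geq 0$ this can be obtained by extending $u,v$ to functions on $\mathbb{R}\times\mathbb{R}$ belonging to Bourgain spaces $X^{s,b}$ with $b=1/2+$, invoking the Kenig--Ponce--Vega bilinear estimate for the KdV symbol, and restricting back to $(0,L)\times(0,T)$; the factor $T^{\theta}$ comes from the standard time-localization of $X^{s,b}$. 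With the linear estimates of the previous paragraph together with this bilinear estimate, $\Gamma$ is seen to map the ball of radius $\sim r$ in $Z_{s,T^*}$ into itself and to be a strict contraction once $T^{*\theta}r$ is small; uniqueness and Lipschitz dependence on $(\phi,\vec h)$ follow from the same contraction.

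\textbf{Main obstacle and role of the half-integer exclusions.} In my view the hard part is the linear estimate for $W_b\vec h$: obtaining simultaneously the $C([0,T];H^s)$ persistence and the Kato gain $L^2(0,T;H^{s+1})$ at the sharp temporal regularity $\mathbb{H}^s(0,T)$ requires a careful contour/stationary-phase analysis of three coupled boundary characters driven by the cubic symbol, uniformly in $s$. The excluded values $s=(2j-1)/2$ are exactly where the scalar trace theorem fails, so neither the compatibility conditions (\ref{comp1})--(\ref{comp3}) nor the Sobolev embeddings used in linking $\phi_k(0)$ to $h_1^{(k)}(0)$ can be stated cleanly there; away from these thresholds, the three-tiered definition of $s$-compatibility matches precisely the number of boundary traces that are continuous linear functionals on $H^s(0,L)$, and the estimates close.
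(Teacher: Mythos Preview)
Your overall strategy---contraction mapping in $Z_{s,T^*}$ via sharp linear estimates plus a bilinear control of $uu_x$---matches the paper's, and your treatment of the boundary piece $W_b$ via the Laplace representation is right. But two steps do not close as written.

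\textbf{The bilinear estimate.} You propose to prove
\[
\left\|\int_0^{\cdot} W_0(\cdot-\tau)(uv_x)(\tau)\,d\tau\right\|_{Z_{s,T}} \lesssim T^{\theta}\|u\|_{Z_{s,T}}\|v\|_{Z_{s,T}}
\]
by extending $u,v$ to $X^{s,b}$ with $b=\tfrac12+$ and invoking Kenig--Ponce--Vega. This fails because $Z_{s,T}=C_tH^s\cap L^2_tH^{s+1}$ does \emph{not} control the restriction norm of $X^{s,1/2+}$: the Kato gain $L^2_tH^{s+1}$ is morally $X^{s+1,0}$ and carries no positive $b$-weight, so you cannot bound $\|u\|_{X^{s,1/2+}}$ by $\|u\|_{Z_{s,T}}$. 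Running the contraction directly in $X^{s,1/2+}$ is blocked too, since the boundary operator only lands in $X^{s,b}$ for $b<\tfrac12$ (Proposition~2.7). The paper avoids Bourgain spaces altogether for $s\ge 0$: it measures the forcing in $L^1(0,T;H^s)$ (resp.\ $W^{s/3,1}(0,T;L^2)$ for $0<s\le 3$), and the bilinear bound
\[
\int_0^T\|uv_x\|_{H^s}\,dt \le C\,(T^{1/2}+T^{1/3})\,\|u\|_{Z_{s,T}}\|v\|_{Z_{s,T}}
\]
is obtained by elementary Sobolev product rules combined with the $L^2_tH^{s+1}$ smoothing already present in $Z_{s,T}$ (Lemma~3.1). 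The Bourgain machinery is reserved for Theorem~1.3 ($-\tfrac34<s<0$), where Kato smoothing alone does not suffice.

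\textbf{Passing to higher $s$.} Your plan to lift the linear estimate ``by differentiating the equation in $x$'' is problematic: $\partial_x v$ does not satisfy boundary conditions of the same structural type---you lose the datum at $x=0$ and acquire $v_{xxx}(L,t)$, which is not free but determined by the equation. The paper differentiates in \emph{time} instead. For $0\le s\le 3$ the contraction runs in the augmented space $\mathcal{Z}_{s,T}=Z_{s,T}\cap H^{s/3}(0,T;H^1)$ (needed to close the $W^{s/3,1}$ forcing norm). For $3\le s\le 6$ one sets $z=v_t$, which inherits the \emph{same} boundary pattern with data $h_j'$ and solves a linearized KdV with variable coefficient $a=v\in\mathcal{Z}_{3,T^*}$; a separate well-posedness result for that linear problem (Proposition~3.3) gives $z\in\mathcal{Z}_{s-3,T^*}$, and then $v_{xxx}=-v_t-v_x-vv_x$ recovers $v\in\mathcal{Z}_{s,T^*}$. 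Note that the iterated profiles $\phi_k$ you invoke are precisely $\partial_t^k u|_{t=0}$, consistent with time differentiation, not spatial.
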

To get the well-posedness of the IBVP (\ref{PP}) in the space $H^s
(0,L)$ with $s<0$,  the following Bourgain spaces are needed (cf.
\cite{ColKen02, Holmer06, BSZ06}).

 For any given $s\in \mathbb{R}$,
$0\leq b\leq 1 $, $0\leq \alpha \leq 1$ and function $w\equiv
w(x,t): \, \mathbb{R}^2 \to \mathbb{R}$, define
\begin{eqnarray}
& & \Lambda _{s,b} (w) =\left (
\rinfty\rinfty<\tau-(\xi^3+\xi)>^{2b} <\xi >^{2s}
\left |\hat{w}(\xi, \tau )\right |^2 d\xi d\tau \right )^{\frac12} \, ,\nonumber \\
&& \lambda _{\alpha } (w) =  \left ( \rinfty\int _{|\xi |\leq 1}
<\tau >^{2\alpha } \left |\hat{w}(\xi, \tau )\right |^2 d\xi d\tau
\right )^{\frac12}  \label{3.11}
\end{eqnarray}
where $<\cdot>:= (1+|\cdot|^2)^{\frac12}$.  In addition, define
\[ {\cal G}_{s} (w) = \left ( \rinfty (1+|\xi |)^{2s} \left ( \rinfty \frac{|\hat{w} (\xi ,
 \tau )|}{1+ |\tau - (\xi ^3-\xi )|} d \tau \right )^2 d\xi \right )^{1/2}, \]

\[ {\cal Q}_{s,b} (w) = \left ( \rinfty \rinfty (1+|\xi |)^{2s}   \frac{|\hat{w} (\xi , \tau )
|^2}{\Big (1+ |\tau - (\xi ^3-\xi )|\Big )^{2b}} d \tau   d\xi \right )^{1/2} \]
and
\[ {\cal P}_{\alpha} (w) = \left ( \rinfty   \int _{|\xi |\leq 1} \frac{|\hat{w} (\xi ,
 \tau )|^2}{(1+ |\tau|  )^{2(1-\alpha )}} d \tau   d\xi \right )^{1/2} .\]
Let $X_{s,b}  $ be the space of  all functions $w$ satisfying
\[ \norm{w} _{X_{s,b}}  :=   \Lambda _{s,b}  (w)  < \infty \]
  while  $Y_{s,b} $  is the space
of  all $w$ satisfying
\[ \norm{w}_{Y _{s,b}} : = \left (  {\cal G}^2_{s}(w) +{\cal Q}^2_{s,b} (w) \right )^{1/2} < \infty .\]
In addition, let $X_{s,b}^{\alpha } $ be the space of  all functions
$w$ satisfying
\[ \norm{w}_{X^{\alpha }_{s,b}}  :=  \left (\Lambda _{s,b}^2 (w) + \lambda _{\alpha }^2 (w)
\right ) ^{1/2} < \infty \]   and let $Y_{s,b}^{\alpha }$ be the
space of  all $w$ satisfying
\[ \norm{w}_{Y^{\alpha }_{s,b}} : = \left ({\cal P} _{\alpha }^2 (w) + {\cal G}^2_{s}(w) +{\cal Q}^2_{s,b} (w) \right )^{1/2} < \infty .\]
The spaces $X_{s,b}$, $Y_{s,b}$, $X^{\alpha }_{s,b} $ and $Y^{\alpha
}_{s,b}$ are all Banach spaces.  Note that  $X_{s,b}$ and $X^{\alpha
}_{s,b} $ are equivalent when $b\geq \alpha $. The spaces $Y_{s,b}$
and $X_{s, -b}$ are also equivalent when $b< \frac12 $. Define also
\[ {\cal X}^{\alpha }_{s,b} \equiv C(R;H^s (R)) \cap X_{s,b}^{\alpha } \]
with the norm
\[ \norm{w}_{{\cal X}^{\alpha }_{s,b}} = \left ( \sup _{t\in R}
\norm{w(\cdot, t)}_{H^s (R)}^2 + \norm{w}_{X_{s,b}^{\alpha }}^2
\right )^{1/2} .\]
%By their definitions,
%\[X^{\alpha _1 }_{s, b_1 } \subset X^{\alpha _2}_{s, b_2} \qquad \mbox{ if $\alpha _1 \leq \alpha _2 $ and $b_1 \leq b_2$,} \]
%  whereas  \[Y^{\alpha _2 }_{s, b_2 } \subset Y^{\alpha _1}_{s, b_1}\qquad \mbox{if $\alpha _1 \geq \alpha _2 $ and $b_1 \leq b_2$.} \]

The above Bourgain-type spaces are defined for functions posed on
the whole plane $\mathbb{R}\times  \r$. However,  the IBVP
(\ref{PP}) is posed  on the finite domain $(0,L)\times (0,T)$.  It
is thus natural to   define a restricted  version of the Bourgain
space $X_{s,b}$ to the  domain $(0,L)\times (0,T)$  as follows:
\[ X_{s,b}^T  = X_{s,b}\Big |_{(0,L)\times
(0,T) }\] with the quotient norm
\[ \norm{u}_{X_{s,b}^T}  \equiv \inf _{w\in X_{s,b}}\{\norm{w}_{X_{s,b}} : \ w(x,t) =u(x,t)
\ \mbox{on} \ (0,L)\times (0,T) \}\] for any given function $u(x,
t)$ defined on $(0,L)\times (0,T)$.   The spaces $Y_{s,b}^T  $,
$X^{\alpha ,T}_{s,b}  $, $Y^{\alpha ,T}_{s,b} $ and $ {\cal
X}^{\alpha ,T }_{s,b} $ are defined similarly.

\medskip
Next theorem, another main result of this paper, provides a positive
answer to Problem (2) listed earlier.

\begin{thm} Let $s\in (-\frac34,0) $, $T>0$ and $r>0$ be given. There exist
  $T^*\in (0,T]$, $\alpha >\frac12$  and $0< b< \frac12$ such that for any
 \[ (\phi, \vec{h} )\in D_{s,T}\] satisfying \[ \| (\phi ,
\vec{h})\|_{D_{s,T}} \leq r, \]
the IBVP (\ref{PP}) admits a unique
solution
\[ u\in C([0,T^*]; H^s (0,L))\cap X^{\alpha, T^* }_{s,b}.\]
Moreover, the corresponding solution map is Liptschitz  continuous.
\end{thm}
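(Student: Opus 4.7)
The plan is to recast the IBVP (\ref{PP}) as an equivalent integral equation on $(0,L)\times(0,T^*)$ and solve it by contraction mapping in the Bourgain-type space $\mathcal{X}^{\alpha,T^*}_{s,b}$ for appropriately chosen parameters $\alpha>\frac12$ and $0<b<\frac12$. Following the general framework developed in Bona--Sun--Zhang and Colliander--Kenig for KdV-type IBVP, I would decompose any candidate solution as
\[ u(x,t) = \left[W_0(t)\tilde{\phi}\right](x) + \left[W_{bdr}(t)\vec{h}\right](x) - \int_0^t W_0(t-\tau)\partial_x\left(\tfrac{u^2}{2}\right)(\tau)\,d\tau, \]
where $\tilde{\phi}$ is an $H^s(\r)$-extension of $\phi$, $W_0(t)$ is the free propagator for $\partial_t+\partial_x+\partial_{xxx}$ on $\r$, and $W_{bdr}(t)\vec{h}$ is the boundary integral operator solving the linear IBVP (\ref{LPP}) with zero initial data and boundary values $\vec{h}$, constructed by explicit Laplace--Fourier inversion.

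The core technical task is to assemble four estimates. \emph{(i)} a free-evolution estimate $\norm{\eta(t)W_0(t)\tilde\phi}_{\mathcal{X}^\alpha_{s,b}}\lesssim \norm{\tilde\phi}_{H^s(\r)}$ with $\eta\in C_0^\infty(\r)$, which is standard for Bourgain spaces; \emph{(ii)} a boundary integral estimate $\norm{\eta(t/T)W_{bdr}(t)\vec{h}}_{\mathcal{X}^\alpha_{s,b}}\lesssim \norm{\vec{h}}_{\mathbb{H}^s(0,T)}$, requiring case-by-case analysis of the three boundary operators through their explicit Laplace--Fourier representations, where the boundary data spaces $H^{(s+1)/3}\times H^{s/3}\times H^{(s-1)/3}$ force the introduction of the low-frequency auxiliary norm $\lambda_\alpha$ with $\alpha>\frac12$; \emph{(iii)} a Duhamel estimate $\norm{\int_0^t\eta(t)W_0(t-\tau)F(\tau)\,d\tau}_{\mathcal{X}^\alpha_{s,b}}\lesssim \norm{F}_{Y^\alpha_{s,b}}$; and \emph{(iv)} the bilinear estimate
\[ \norm{\partial_x(uv)}_{Y^\alpha_{s,b}}\lesssim \norm{u}_{X^\alpha_{s,b}}\norm{v}_{X^\alpha_{s,b}}. \]
Together with a time-localization lemma $\norm{\eta(t/T)w}_{X^\alpha_{s,b}}\lesssim T^\theta\norm{w}_{X^\alpha_{s,b'}}$ valid for some $\theta>0$ and $b<b'<\frac12$, these ingredients permit a standard contraction argument on a closed ball of $\mathcal{X}^{\alpha,T^*}_{s,b}$ whose radius depends only on $r$.

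The main obstacle will be the bilinear estimate (iv); the threshold $s>-\frac34$ is precisely the Kenig--Ponce--Vega barrier for the KdV nonlinearity, and the proof proceeds by dyadic decomposition in both frequency $\xi$ and modulation $\tau-(\xi^3-\xi)$, with careful case analysis of high-high-to-low, high-low, and near-resonant interactions together with repeated use of Cauchy--Schwarz and the algebraic identity
\[ \xi^3 - \xi_1^3 - \xi_2^3 = 3\xi\xi_1\xi_2 \quad \text{whenever} \quad \xi = \xi_1+\xi_2, \]
from which one extracts $\xi_1\xi_2$ smallness sufficient to beat the derivative in $\partial_x(uv)$ down to $s>-\frac34$. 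The auxiliary low-frequency norm $\lambda_\alpha$ forces a separate verification of the bilinear estimate on $|\xi|\le 1$, and it is there that the exponent $1-\alpha$ compensating a derivative loss must be balanced against the constraint $b<\frac12$; this balancing determines the admissible pair $(\alpha,b)$. Once these estimates are in hand, uniqueness and Lipschitz continuous dependence follow from the bilinear structure of the fixed-point map by standard difference-equation arguments.
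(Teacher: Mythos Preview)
Your approach is essentially the one taken in the paper: recast the IBVP as an integral equation, use the explicit boundary integral operator $W_{bdr}$, extend everything to $\r\times\r$, and run a contraction in the Bourgain-type space $\mathcal{X}^{\alpha,T^*}_{s,b}$ using the four ingredients you list (free evolution, boundary operator, Duhamel, bilinear). The paper packages these as Lemmas 3.5--3.7, which in turn rest on Proposition 2.7 (the $W_{bdr}$ estimate) and Lemma 2.11 (the bilinear estimate, cited from Kenig--Ponce--Vega and Colliander--Kenig rather than re-proved).

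One notational slip worth fixing: you write that $W_0(t)$ is the free propagator on $\r$, but in the paper's notation $W_0(t)$ is the semigroup for the \emph{homogeneous} IBVP on $(0,L)$, while the whole-line propagator is $W_{\r}(t)$. The integral equation you wrote is the correct one if $W_0$ denotes the homogeneous-BC semigroup; if you really mean the whole-line propagator $W_{\r}$, then both the initial-data term and the Duhamel term fail the boundary conditions and must each be corrected by subtracting a $W_{bdr}$ applied to their boundary traces (this is exactly the content of Lemmas 2.5 and 2.6 in the paper). Either way the estimates close, but you should be clear about which decomposition you are using so that the boundary-trace corrections are accounted for in both the linear and the nonlinear pieces.
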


The following remarks are in order.

\begin{itemize}
\item[(i)] According to Theorem 1.2, the IBVP (\ref{PP}) is
well-posed in the space $H^s (0, L)$ for any $s\geq 0$, not just for
$s=0$ or $s=1$. In particular, it demonstrates the existence of
classical solutions and shows that the smoother of the initial value
and boundary data, the smoother the  corresponding solution.
\item[(ii)] In order to have solution $u$ in the space $C(0,T]; H^s
(0,L))$, Theorem 1.2 only requires that its initial value $\phi \in
H^s (0,L)$ and its boundary data
\begin{equation}\label{cond-1} h_1\in H^{\frac{s+1}{3}} (0,T),\quad h_2 \in
H^{\frac{s}{3}} (0,T), \quad h_3 \in H^{\frac{s-1}{3}}
(0,T).\end{equation}
 In particular, if $s=1$, it is sufficient to
require that
\[ h_1\in H^{\frac13} (0,T), \quad h_2 \in L^2 (0,T), \quad h_3 \in
H^{-\frac13} (0,T) ,
\] rather than $h_j\in C^1(0,T), j=1,2,3$ as in Theorem A.   Moreover, the
condition (\ref{cond-1}) is optimal in order to have the
corresponding solution $u\in C([0,T];H^s (0,L))$.
\item[(iii)] Taking hint from   the recent works of Bona, Sun and Zhang \cite{bsz-finite},  Molinet \cite{molinet-2},
and Molinet and Vento \cite{molinet-3}, we conjecture that the IBVP
(\ref{PP}) is locally well-posed in the space $ H^s(0,L)$ for $-1<s
\leq -\frac34 $, but ill-posed in the space $H^s (0,L)$ for any
$s<-1$.
\end{itemize}

In the literature, there is another class of IBVP of the KdV
equation posed on the finite domain $(0,L)$ as given below which has
been well studied  in the past  few years \cite{zh,
BSZ03FiniteDomain, Holmer06,bsz-finite}.
\begin{equation}\label{DPP}
        \begin{cases}
        u_t+u_x+u_{xxx}+uu_x=0,\qquad u(x,0)=\phi(x)  & x\in (0,L),  \ t\in
        \r^+ ,
        \\
        u(0,t)=h_1(t),\ u(L,x)=h_2(t),\ u_{x}(L,t)=h_3(t).
        \end{cases}
    \end{equation}
It is  interesting and constructive to compare the study of the IBVP
(\ref{DPP}) with that of the IBVP
   (\ref{PP}).

\medskip
While the study of the IBVP (\ref{DPP}) goes back as early as
\cite{Bubnov79, Bubnov80},  the nonhomogeneous IBVP (\ref{DPP}) was
first shown by Bona, Sun and Zhang \cite{BSZ03FiniteDomain}  to be
locally well-posed in the space $H^s (0,L)$ for any $s\geq 0$:

\medskip
\emph{ Let $s\geq 0$ , $r>0$ and $T>0$ be given.}
 \emph{There
exists $T^*\in (0, T]$ such that for \\ any $s-$compatible
\footnote{see \cite{BSZ03FiniteDomain}  for  the exact definition of
$s-$compatibbility.}
\[ \phi \in H^s (0,L),  \quad \vec{h}= (h_1, h_2, h_3) \in
H^{\frac{s+1}{3}}(0,T) \times H^{\frac{s+1}{3}}(0,T)\times
H^{\frac{s}{3}} (0,T) \] satisfying
\[ \| \phi \| _{H^s (0,L)} + \| \vec{h}\|_{H^{\frac{s+1}{3}}(0,T) \times H^{\frac{s+1}{3}}(0,T)\times
H^{\frac{s}{3}} (0,T)} \leq r,\] the IBVP (\ref{DPP}) admits a
unique solution
\[ u\in C([0,T^*]; H^s (0,L))\cap L^2 (0,T^*; H^{s+1} (0,L)).\]
Moreover, the corresponding solution map is Lipschitz continuous in
the corresponding spaces.}

\medskip
Later  Holmer \cite{Holmer06}  showed that the IBVP (\ref{DPP}) is
locally well-posed in the space $H^s (0,L)$ for any $-\frac34 <s<
\frac12$:

\medskip
 \emph{Let $s\in (-\frac34, \frac12)$, $r>0$
and $T>0$ be given.  here exists a $T^*\in (0, T]$ such that for any
\[ \phi \in H^s (0,L),  \quad \vec{h}= (h_1, h_2, h_3) \in
H^{\frac{s+1}{3}}(0,T) \times H^{\frac{s+1}{3}}(0,T)\times
H^{\frac{s}{3}} (0,T) \] satisfying
\[ \| \phi \| _{H^s (0,L)} + \| \vec{h}\|_{H^{\frac{s+1}{3}}(0,T) \times H^{\frac{s+1}{3}}(0,T)\times
H^{\frac{s}{3}} (0,T)} \leq r,\] the IBVP (\ref{DPP}) admits a
unique mild solution \footnote{A function $u\in C([0, T^*]; H^s
(0,L))$ is said to be a mild solution of the IBVP (\ref{DPP}) if
there  exist a sequence
\[ u_n \in C^1([0, T^*]; L^2 (0,L))\cap C[(0,T^*]; H^3 (0,L)), \quad
n=1,2, \cdots \] solving the equation in (\ref{DPP}) and as $n\to
\infty $,
\[ u_n \to u \quad in \ C([0, T^*]; H^s (0,L)),\]
\[ h_{1,n}:= u_n (0, \cdot) \to h_1 , \qquad h_{2,n} := u(L, \cdot)
\to h_2 \quad in H^{\frac{s+1}{3}}(0, T^*) \] and
\[ h_{3,n} :=\partial _x u_n (L, \cdot )\to h_3 \quad in
H^{\frac{s}{3}} (0, T^*) .\]}
\[ u\in C([0,T^*]; H^s (0,L)).\]
Moreover, the corresponding solution map is Lipschitz continuous in
the corresponding spaces.}

\medskip
More recently, Bona, Sun and Zhang \cite{bsz-finite} showed that the
IBVP (\ref{DPP}) is locally well-posed $H^s (0,L)$ for any $s>-1$.

\medskip
 \emph{Let $r>0$, $-1< s\leq 0$ and $T>0$ be given.}
\emph{There exists a $T^*\in (0, T]$ such that for any
\[ \phi \in H^s (0,L),  \quad \vec{h}= (h_1, h_2, h_3) \in
H^{\frac{s+1}{3}}(0,T) \times H^{\frac{s+1}{3}}(0,T)\times
H^{\frac{s}{3}} (0,T) \] satisfying
\[ \| \phi \| _{H^s (0,L)} + \| \vec{h}\|_{H^{\frac{s+1}{3}}(0,T) \times H^{\frac{s+1}{3}}(0,T)\times
H^{\frac{s}{3}} (0,T)} \leq r,\] the IBVP (\ref{DPP}) admits a
unique mild solution
\[ u\in C([0,T^*]; H^s (0,L)).\]
Moreover, the corresponding solution map is Lipschitz continuous in
the corresponding spaces.}

\medskip
Although there is only a slight difference between the boundary
conditions of IBVP (\ref{PP}) and the IBVP (\ref{DPP}), there is a
big gap between their well-posedness results. For the IBVP
(\ref{PP}), the well-posedness results presented in Theorem 1.2 and
Theorem 1.3 are local in the sense that the time interval $(0, T^*)$
in which the solution exists depends on $r$ and, in general, the
larger the $r$, the smaller the $T^*$. By contrast, the IBVP
(\ref{DPP}) is known to be globally well-posed in the space $H^s
(0,L)$ for any $s\geq 0$ in the sense one always has $(0,T^*)=(0,T)$
no matter how large the $r$ is (cf. \cite{BSZ03FiniteDomain,
fami-1}). The cause of this difference is that the $L^2-$energy of
the solution of the homogeneous IBVP (\ref{DPP}) $(\vec{h}=0$) is
decreasing: \
\[ \frac{d}{dt} \int ^L_0 u^2 (x,t) dx =-\frac32 u_x^2(0,t) \  for
\quad any \ t\geq 0 .\] But for the homogeneous IBVP (\ref{PP}), it
is not clear at all, in general, whether the $L^2-$energy of its
solution is increasing or decreasing since
\[ \frac{d}{dt}\int ^L_0 u^2(x,t)= -\frac32 u^2_x(0,t)+ 3u^3(L,t) \  for
\quad any \ t\geq 0 .\]

The approach used in the proof of their results in
\cite{BSZ03FiniteDomain, famin-2,  Holmer06} is very much different
from what used in the proof of Theorem A, but more or less along the
line used in the proof of Theorem B, in which the smoothing property
of the associated linear system play an important role.  In this
paper, we will use the same approach as that developed in
\cite{BSZ03FiniteDomain, BSZ06}  to prove our Theorem 1.2 and
Theorem 1.3. The key ingredients of the approach are listed below.

\medskip
\noindent (1). An explicit solution formula  will be derived for the
following  nonhomogeneous boundary value problem of the linear
equation:
\begin{equation}\label{bkdv}
\begin{cases}
v_t +v_x  +v_{xxx}  =0, \ x\in (0,L), \ t\geq 0, \\ v(x,0)=
0, \\ v(0,t)= h_1 (t), \quad v_x(L,t) =  h_2(t), \\
\qquad  v_{xx} (L,t)= h_3 (t),
\end{cases}
\end{equation}
which not only enables us to  establish the well-posedness of the
IBVP (\ref{PP}) with the optimal regularity conditions imposed on
the boundary data, but also plays  an important roles in obtaining
the well-posedness of the IBVP (\ref{PP}) in the space $H^s (0,L)$
with $-\frac34< s<0$.

\medskip
\noindent (2). The smoothing property of the associated linear
problem
\begin{equation}\label{bkdv-1}
\begin{cases}
v_t +v_x  +v_{xxx}  =f, \ x\in (0,L), \ t\geq 0, \\ v(x,0)= \phi
(x), \\ v(0,t)=h_1(t), \quad v_x(L,t) = h_2 (t),  \quad  v_{xx}
(L,t)=h_3 (t).
\end{cases}
\end{equation}
\emph{For given  $0\leq s\leq 3$ and $T>0$, there exists a constant
$C>0$ such that the solution $v$ of (\ref{bkdv-1}) satisfies
\[ \|v\|_{Z_{s,T}}\leq C \left ( \| (\phi, \vec{h})\|_{D_{s,T}}
+ \| f\|_{W^{\frac{s}{3},1}(0,T; H^s(0,L))} \right ) \]
 for any $(\phi,
\vec{h})\in D_{s,T} $ and $f\in W^{\frac{s}{3},1}(0,T; H^s(0,L))$.}
This property is an extension of the smoothing property obtained by
Colin and Ghidalia to the nonhomogeneous problem.

\medskip
 \noindent (3). Following Bona, Sun and Shuming \cite{BSZ06}, the IBVP (\ref{PP}) will
be converted to an integral equation posed on the whole line
$\mathbb{R}$ which make it possible to conduct Bourgain spaces
analysis  to obtain the well-posedness of the IBVP (\ref{PP}) in
$H^s (0,L)$ for $-\frac34<s<0$.

\medskip

 This paper is organized as follow. In  Section 2, we will study various  linear
 problems associated to the IBVP (\ref{PP}).  The  Section 3 is devoted to the  well-posedness  of
 the nonlinear
 IBVP (\ref{PP}). The
 paper is ended with some concluding remarks given in Section 4.
 Some open questions will also be listed in Section 4 for further
 investigations.

\section{Linear Problems}
\setcounter{equation}{0}
\subsection{The boundary integral operators}
Consider the nonhomogeneous boundary-value problem
   \begin{equation}\label{2-1}
        \begin{cases}
            v_t+v_x +v_{xxx}=0, \quad v(x,0)=0, \quad x \in (0,L), \quad t\geq 0.\\
            v(0,t)=h_1(t), \ v_x(L,t)=h_2(t), \ v_{xx}(L,t)=h_3(t).
        \end{cases}
    \end{equation}
We  derive  an explicit solution formula of the IBVP (\ref{2-1}).
(Without loss of generality, we assume that $L=1$ in this
subsection).  Applying the Laplace transform with respect to $t$,
(\ref{2-1}) is converted to
    \begin{equation}\label{2-2}
        \begin{cases}
            s\h{v}+ \h{v}_x+\h{v}_{xxx}=0,\\
            \h{v}(0,s)=\h{h}_1(s),\  \h{v}_x(1,s)=\h{h}_2(s),\
            \h{v}_{xx}(1,\xi)=\h{h}_3(s),
        \end{cases}
    \end{equation}
    where  $$\h{v}(x,\xi)=\int_0^{+\infty} e^{-s t}v(x,t)dt$$
    and
    \[  \hat{h}_j (s) =  \int^ {\infty}_0 e^{-st} h_j(t) dt, \quad j=1,2,3.\]
The solution of (\ref{2-2}) can be written in the form
\[ \h{v}(x,s)=\sum_{j=1}^3 c_j(s) e^{\lam_j(s)x}\]
where $\lambda _j (s), j=1,2,3$ are solutions of the characteristic
equation
\[ s +  \lam  + \lam^3=0\]
and $c_j (s), j=1,2,3$, solves the linear system
\begin{equation}\label{ls}
\underbrace{\begin{pmatrix}
1 &  1& 1\\
\lam_1 e^{\lam_1} & \lam_2 e^{\lam_2} & \lam_3 e^{\lam_3}\\
\lam_1^2 e^{\lam_1} & \lam_2^2 e^{\lam_2} & \lam_3^2 e^{\lam_3}\\
\end{pmatrix}}_{A}
\begin{pmatrix}
c_1\\ c_2\\c_3
\end{pmatrix}
= \underbrace{\begin{pmatrix} \h{h}_1\\ \h{h}_2 \\ \h{h}_3
\end{pmatrix}}_{\hat{\vec{h}}.}
\end{equation}
By Cramer's rule,
$$c_j= \frac{\Delta_j(s)}{\Delta(s)}, \ j=1,2,3$$
 with $\Delta$ the determinant of $A$ and $\Delta_j$ the determinant of the
 matrix
 $A$ with the column $j$ replaced by $\widehat{\vec{h}}$.
 Taking the inverse Laplace transform of $\widehat{v}$ and following the same arguments as
 that in \cite{BSZ03FiniteDomain} yield  the representation
 \[ v(x,t)=\sum ^3_{m=1} v_m (x,t)\]
 with
 \[ v_m (x,t)=\sum ^3_{j=1}v_{j,m}(x,t)\]
 and
 \[ v_{j,m}(x,t)= v_{j,m}^+ (x,t)+v_{j,m}^-(x,t)\]
 where
 \[ v_{j,m}^+(x,t)=\frac{1}{2\pi i}  \int_{0}^{+i\infty}  e^{s t}
 \frac{\Delta_{j,m}(s)}{\Delta(s)}  \h{h}_m(s) e^{\lam_j(s)x}ds\]
 and
 \[ v_{j,m}^-(x,t)=\frac{1}{2\pi i}  \int_{-i\infty}^{0}  e^{s t}
 \frac{\Delta_{j,m}(s)}{\Delta(s)}  \h{h}_m(s) e^{\lam_j(s)x}ds\]
 for $j, m =1,2,3$.   Here $\Delta_{j,m}(s)$ is obtained from $\Delta _j(s)$ by letting $\hat{h}_m (s) = 1$ and
$\hat{h}_k(s) = 0$ for $k \ne m, $ $k,m = 1, 2, 3$.
   Making the substitution $s=i(\rho ^3-\rho)$ with $1< \rho < \infty $ in the  the characteristic equation
   $$  s +  \lam   + \lam^3=0,$$
   the three roots are given in terms of $\rho$ by

     \begin{equation}\label{sign}
       \lam_1^+(\rho)=i\rho, \quad  \lam_2^+(\rho)= \frac{\sqrt{3\rho ^2-4}-i\rho}{2} \quad
       \lam_3^+(\rho)= \frac{-\sqrt{3\rho ^2-4}-i\rho}{2}
      .
      \end{equation}
Thus $v_{j,m}^+ (x,t)$ has the form

\[ v_{j,m}^+ (x,t)= \ds\frac{1}{2\pi}   \int^{\infty}_1  e^{i(\rho^3-\rho) t}
    \frac{\Delta_{j,m}^+(\rho)}{\Delta ^+(\rho)} \h{h}_m^+(\rho) e^{\lam_j^+(\rho)x}
    (3\rho^2-1)d\rho\]
    and \[ v_{j,m}^-(x,t)= \overline{v_{j,m}^+ (x,t)}\]
    where $\h{h}_m^+(\rho)=\h{h}_m(i(\rho ^3-\rho))$, $\Delta
    ^+(\rho)$ and $\Delta ^+_{j,m} (\rho)$ are obtained from $\Delta
    (s)$ and $\Delta _{j,m} (s)$ by replacing $s$ with $i(\rho
    ^3-\rho )$ and  $\lambda _j ^+ (\rho)=\lambda _j (i(\rho
    ^3-\rho))$.

For given $m, j=1,2,3$,  let $W_{j,m} $ be  an  operator on $H^s_0
(\mathbb{R}^+)$ defined as follows:  for any $h\in H^s _0
(\mathbb{R}^+)$,
\begin{equation}
 [W_{j,m}h](x,t) \equiv [U_{j,m} h](x,t) +\overline{[U_{j,m}h](x,t)}\label{c-1}
\end{equation}
with
\begin{eqnarray} [U_{j,m} h](x,t)& &\equiv \frac{1}{2\pi } \int ^{+\infty }_{1} e^{i(\rho ^3
-  \rho )t} e^{-\lambda ^+_j (\rho ) (1-x)}   (3\rho ^2 -1 )
[Q_{j,m}^+h](\rho )d\rho \nonumber \\
& & =  \frac{1}{2\pi } \int ^{+\infty }_{1} e^{i(\rho ^3 -  \rho )t}
e^{-\lambda ^+_j (\rho ) x'}   (3\rho ^2 -1 ) [Q_{j,m}^+h](\rho
)d\rho,  \quad (x'=1-x), \label{c-2}
\end{eqnarray}
for $j=1,2, \ m=1,2,3 $ and
\begin{equation}
 [U_{3,m} h](x,t) \equiv  \frac{1}{2\pi } \int ^{+\infty }_{1} e^{i(\rho ^3
-  \rho )t} e^{\lambda ^+_3 (\rho ) x}   (3\rho ^2 -1 )
[Q_{3,m}^+h](\rho )d\rho \label{c-3}
\end{equation}
for $m=1,2,3$. Here \[ [Q_{3,m} ^+h] (\rho ):=\frac{\Delta ^+_{3,m}
(\rho ) } {\Delta ^+ (\rho )}   \hat{h}   ^+ (\rho ), \qquad
[Q_{j,m}^+h] (\rho )=\frac{\Delta ^+_{j,m} (\rho ) } {\Delta ^+
(\rho )} e^{\lambda ^+_j (\rho )} \hat{h} ^+ (\rho )\] for $j=1,2 $
and $m=1,2,3$, $\hat{h}^+ (\rho) = \hat{h} (i(\rho ^3-\rho ))$. Then
the solution of the IBVP (\ref{2-1}) has the following
representation.

\begin{lem}
Given  $h_1, h_2$ and $h_3$, defining $\vec{h}=(h_1,h_2,h_3)$.  The
solution $v$ of the IBVP (\ref{2-1}) can be written in the form
\begin{equation}\label{bdrS}
v(x,t)=[W_{bdr}\vec{h}](x,t):= \sum_{j,m=1}^3 [W_{j,m}h_m](x,t).
\end{equation}
\end{lem}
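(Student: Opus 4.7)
The proof is a direct Laplace-transform computation; most of the ingredients have already been assembled in the paragraphs preceding the statement, and the only task is to show that after the substitution $s = i(\rho^{3} - \rho)$ the resulting integrals reassemble into the operators $W_{j,m}$ defined in (\ref{c-1})--(\ref{c-3}). My plan is the following.

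First I apply the Laplace transform in $t$ to (\ref{2-1}) to obtain (\ref{2-2}), whose general solution has the form $\hat v(x,s) = \sum_{j=1}^{3} c_j(s)\, e^{\lambda_j(s) x}$, with $\lambda_j(s)$ the three roots of $s + \lambda + \lambda^{3} = 0$ and with the coefficients $c_j(s)$ given by Cramer's rule on the system (\ref{ls}). Taking the inverse Laplace transform along the imaginary axis and splitting it into the halves $\{s = i\sigma : \sigma > 0\}$ and $\{s = i\sigma : \sigma < 0\}$ yields the decomposition $v = \sum_{j,m} (v_{j,m}^{+} + v_{j,m}^{-})$. Since the boundary data $h_m$ are real-valued, the roots and the Cramer determinants on the two halves are complex conjugates of each other, so $v_{j,m}^{-} = \overline{v_{j,m}^{+}}$ and only the upper half needs to be analysed.

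Next, on the upper half I parametrize $s = i\sigma$ with $\sigma > 0$ and then set $\sigma = \rho^{3} - \rho$ with $\rho \in (1,\infty)$; since $\rho \mapsto \rho^{3}-\rho$ is a smooth increasing bijection of $(1,\infty)$ onto $(0,\infty)$ with Jacobian $d\sigma = (3\rho^{2} - 1)\, d\rho$, this is legitimate. Under this substitution the cubic $\lambda^{3} + \lambda + i(\rho^{3} - \rho) = 0$ admits $\lambda = i\rho$ as an obvious root, and factoring yields the residual quadratic $\lambda^{2} + i\rho\lambda + (1 - \rho^{2}) = 0$, whose roots are $(-i\rho \pm \sqrt{3\rho^{2} - 4})/2$; these are precisely the expressions $\lambda_j^+(\rho)$ in (\ref{sign}). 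The substitution converts each $v_{j,m}^{+}$ into
\[
\frac{1}{2\pi}\int_{1}^{\infty} e^{i(\rho^{3}-\rho)t}\, \frac{\Delta_{j,m}^{+}(\rho)}{\Delta^{+}(\rho)}\, \hat h_{m}^{+}(\rho)\, e^{\lambda_{j}^{+}(\rho) x}\, (3\rho^{2} - 1)\, d\rho,
\]
which for $j = 3$ is already in the form $[U_{3,m} h_m](x,t)$ required by (\ref{c-3}), since $\operatorname{Re}\lambda_3^+(\rho) \leq 0$ keeps $e^{\lambda_3^+(\rho) x}$ bounded on $[0,1]$. For $j = 1, 2$ the root $\lambda_{j}^{+}(\rho)$ has nonnegative real part, so to match the bounded kernel of (\ref{c-2}) I rewrite $e^{\lambda_{j}^{+}(\rho) x} = e^{\lambda_{j}^{+}(\rho)}\, e^{-\lambda_{j}^{+}(\rho)(1 - x)}$ and absorb the factor $e^{\lambda_{j}^{+}(\rho)}$ into $[Q_{j,m}^{+} h](\rho)$ exactly as it is defined in the paragraph preceding the lemma. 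Summing the two conjugate halves gives $W_{j,m} h_m = U_{j,m} h_m + \overline{U_{j,m} h_m}$, and summing over $j,m$ produces the representation (\ref{bdrS}).

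The main point that needs careful verification is (i) that the Bromwich contour for the inverse Laplace transform may be deformed to the imaginary axis. This requires checking that $\Delta(s)$ has no zeros in the right half-plane (otherwise residues would contribute), and that the Cramer ratios $\Delta_{j,m}(s)/\Delta(s)$ decay sufficiently on large right semicircles; both facts follow from a direct analysis of the roots $\lambda_j(s)$ for $\operatorname{Re} s \geq 0$ and are exactly parallel to the argument carried out in \cite{BSZ03FiniteDomain}. Secondary issues are (ii) the endpoint behavior at $\rho = 1$, where $\sqrt{3\rho^{2} - 4}$ is imaginary and the Jacobian $3\rho^{2} - 1$ is bounded and positive so no singularity is created, and the factor $e^{\lambda_j^+(\rho)}$ absorbed into $Q_{j,m}^+$ is bounded there; and (iii) the algebraic identification of $\lambda_j(i(\rho^{3}-\rho))$ with (\ref{sign}), which is the one-line factorization indicated above. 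The contour deformation in (i) is the most delicate ingredient, but it is the template already established in \cite{BSZ03FiniteDomain} and the computation reduces to it.
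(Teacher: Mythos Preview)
Your proposal is correct and follows essentially the same route as the paper: the derivation preceding the lemma already carries out the Laplace transform, Cramer's rule, the inverse transform split into $v_{j,m}^{\pm}$, and the substitution $s=i(\rho^{3}-\rho)$, deferring the contour-deformation justification to \cite{BSZ03FiniteDomain} just as you do. Your account is in fact more explicit than the paper's on several points (the factoring that identifies the roots $\lambda_j^+(\rho)$, the reason for absorbing $e^{\lambda_j^+(\rho)}$ into $Q_{j,m}^+$ when $j=1,2$, and the endpoint behavior near $\rho=1$), but the strategy is identical.
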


\subsection{Linear estimates}
Consideration is first given to the IBVP of the linear equation:
\begin{equation}\label{LSP}
        \begin{cases}
            v_t+v_x   +v_{xxx}=f, & x\in (0,L)\\
            v(x,0)=\phi(x),\\
            v(0,t)=0, \ v_x(L,t)=0, \ v_{xx}(L,t)=0.
        \end{cases}
    \end{equation}
By the standard semigroup theory \cite{Pazy}, for any $\phi \in L^2
(0,L)$ and $f\in L^1 (0,T; L^2 (0,L))$, it admits a unique solution
$v\in C([0,T]; L^2 (0,L))$, which  can be written in the form
$$v(t)=W_0(t)\phi+ \int ^t_0 W_0 (t-\tau )f(\tau ) d\tau $$
where $W_0$ is  the $C_0$-Semigroup in the space $L^2(0,L)$
generated by the linear operator
$$Af=-f'''- f'$$
with the domain
$$\D(A)=\{ f\in H^3(0,L): f(0)=f'(L)=f''(L)=0  \}.$$
\begin{prop}\label{2.1} Let $T>0$ be given. There exists a constant
$C$ such that for any $\phi \in L^2(0,L)$ and $f\in L^1(0,T; L^2
(0,L))$, the corresponding solution $v$ of the IBVP (\ref{LSP})
belongs to the space  $ Z_{0,T}$ and
\begin{equation}\label{2.1-1} \| v\|_{Z_{0,T}} \leq C \left ( \| \phi
\| +\|f\|_{L^1 (0, T; L^2 (0,L))} \right ) .\end{equation}
\end{prop}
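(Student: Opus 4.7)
The plan is to represent the solution via the Duhamel formula
\[ v(t) = W_0(t)\phi + \int_0^t W_0(t-\tau)f(\tau)\,d\tau, \]
reduce everything to an estimate for the pure initial-value problem ($f\equiv 0$), and then recover the general bound by Minkowski's inequality in the Banach space $Z_{0,T}$.

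First I treat the unforced problem. For $\phi\in \D(A)$ the solution is classical, so the standard energy identity can be carried out rigorously: multiplying $v_t+v_x+v_{xxx}=0$ by $2v$, integrating over $(0,L)$, and using the boundary conditions $v(0,t)=0$, $v_x(L,t)=0$, $v_{xx}(L,t)=0$, every boundary contribution either vanishes or comes with a favourable sign, producing
\[ \frac{d}{dt}\int_0^L v^2(x,t)\,dx = -v_x^2(0,t) - v^2(L,t). \]
This immediately gives $\|v(\cdot)\|_{C([0,T];L^2(0,L))} \le \|\phi\|_{L^2(0,L)}$ together with the hidden trace control $v_x(0,\cdot), v(L,\cdot) \in L^2(0,T)$. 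For the $L^2(0,T;H^1(0,L))$-smoothing, which is the heart of the matter, I apply Kato's multiplier: multiplying the equation by $2xv$ and integrating by parts, the boundary contributions at $x=0$ vanish thanks to $v(0,t)=0$, while those at $x=L$ coming from $vv_{xx}$ and $xv_x v_{xx}$ vanish because $v_x(L,t)=v_{xx}(L,t)=0$, leaving
\[ \frac{d}{dt}\int_0^L xv^2\,dx + 3\int_0^L v_x^2\,dx + L\,v^2(L,t) = \int_0^L v^2\,dx. \]
Integrating in $t$ over $[0,T]$ and controlling the right-hand side by the energy bound just obtained yields $\|v_x\|_{L^2(0,T;L^2(0,L))}^2 \le C(T,L)\,\|\phi\|_{L^2(0,L)}^2$, so that $\|W_0(\cdot)\phi\|_{Z_{0,T}} \le C\|\phi\|_{L^2(0,L)}$. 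A density argument then extends the bound to all $\phi\in L^2(0,L)$.

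To handle the forced term I apply Minkowski's integral inequality in $Z_{0,T}$ together with time-translation invariance of the semigroup:
\[ \Bigl\|\int_0^{\cdot} W_0(\cdot-\tau)f(\tau)\,d\tau\Bigr\|_{Z_{0,T}} \le \int_0^T \bigl\|W_0(\cdot-\tau)f(\tau)\chi_{[\tau,T]}(\cdot)\bigr\|_{Z_{0,T}}\,d\tau \le C\int_0^T\|f(\tau)\|_{L^2(0,L)}\,d\tau, \]
where the last inequality uses the homogeneous estimate applied with initial datum $f(\tau)$ placed at time $\tau$. Combined with the bound on $W_0(t)\phi$ this delivers (\ref{2.1-1}). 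The only delicate step is the Kato multiplier computation, and the main obstacle is to verify that the particular Colin--Ghidaglia boundary conditions at $x=0$ and $x=L$ are exactly what is needed for all unwanted boundary traces to cancel while keeping $3\int_0^L v_x^2$ on the correct side of the identity; it is precisely this algebraic conspiracy that makes the smoothing estimate go through.
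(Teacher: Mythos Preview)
Your proof is correct and rests on exactly the same two multiplier identities the paper uses: the basic energy identity obtained by testing with $2v$, and the Kato smoothing identity obtained by testing with $2xv$. The only difference is organizational. The paper keeps the forcing $f$ in the equation while performing both multiplier computations, obtaining
\[
\frac{d}{dt}\int_0^L v^2\,dx + v^2(L,t)+v_x^2(0,t)=2\int_0^L f v\,dx,
\qquad
\frac{d}{dt}\int_0^L x v^2\,dx + L v^2(L,t)+3\int_0^L v_x^2\,dx=\int_0^L v^2\,dx + 2\int_0^L x f v\,dx,
\]
and then reads off the $Z_{0,T}$ bound directly via Gronwall. You instead first prove the bound for the homogeneous semigroup $W_0(t)\phi$ and then recover the forced estimate by Duhamel and Minkowski's integral inequality in $Z_{0,T}$. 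Both routes are standard; the paper's is marginally shorter because it avoids the separate Minkowski step, while your decomposition has the advantage of isolating the semigroup estimate $\|W_0(\cdot)\phi\|_{Z_{0,T}}\le C\|\phi\|_{L^2}$ as a reusable building block.
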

\begin{proof}  First multiplying the both sides of the equation in
(\ref{LSP}) by $2v$ and integrating over $(0,L)$ with respect to $x$
yields that
\[ \frac{d}{dt} \int ^L_0 v^2(x,t) +v^2(L,t)  + v_x^2(0,t)= 2\int ^L_0 f(x,t) v(x,t) dx.\]
Then,  multiplying the both sides of the equation in (\ref{LSP}) by
$2xv$ and integrating over $(0,L)$ with respect to $x$  yields that
\[ \frac{d}{dt} \int ^L_0 xv^2(x,t) +Lv^2(L,t)  +3\int ^L_0 v_x^2 dx
= \int ^L_0 v^2dx + \int ^1_0 f(x,t) v(x,t) dx.\]
The estimate
(\ref{2.1-1}) follows easily.
\end{proof}

Next we consider the nonhomogeneous boundary-value problem
   \begin{equation}\label{SP}
        \begin{cases}
            v_t+v_x +v_{xxx}=0, \quad x \in (0,L)\\
            v(x,0)=0,\\
            v(0,t)=h_1(t), \ v_x(L,t)=h_2(t), \ v_{xx}(L,t)=h_3(t)
        \end{cases}
    \end{equation}
We have the following estimate for the solution of the IBVP
(\ref{SP})

\begin{prop}\label{LbdrProblem}
    For given $T>0$, there exists a constant $C$ such that
    for any $\vec{h}\in \mathbb{H}^s (0,T)$, the corresponding solution $v$
    of the (\ref{SP}) belongs to the space $Z_{0,T}$ and
    \[ \| v\|_{Z_{0,T}} \leq C \| \vec{h}\|_{\mathbb{H}^s(0,T) }. \]
\end{prop}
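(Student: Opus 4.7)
My plan is to use the explicit solution representation given by Lemma 2.2 to reduce the estimate to one-dimensional Fourier-type inequalities in the spectral parameter $\rho$. Writing $v=W_{bdr}\vec h=\sum_{j,m=1}^3 W_{j,m}h_m$, by linearity it suffices to show, for each $(j,m)\in\{1,2,3\}^2$, a bound of the form
\[
\|W_{j,m}h_m\|_{Z_{0,T}}\leq C\|h_m\|_{H^{\sigma_m}(0,T)},
\]
where $\sigma_1=1/3$, $\sigma_2=0$, $\sigma_3=-1/3$ are the Sobolev indices coming from $\mathbb{H}^0(0,T)$. After extending each $h_m$ to an element of $H^{\sigma_m}_0(\mathbb R^+)$ with a controlled extension norm (so that the Laplace integrals defining $W_{j,m}$ make sense), everything reduces to estimating the contour integrals (\ref{c-2})--(\ref{c-3}).

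The first technical step is an asymptotic analysis of the symbols. Using (\ref{sign}), for $\rho$ large one has $\lambda_1^+=i\rho$, $\mathrm{Re}\,\lambda_2^+\sim(\sqrt 3/2)\rho$ and $\mathrm{Re}\,\lambda_3^+\sim-(\sqrt 3/2)\rho$. A direct expansion of the $3\times 3$ determinant defining $A$ shows that $\Delta^+(\rho)$ is dominated as $\rho\to\infty$ by the single exponential $e^{\lambda_2^+(\rho)}$, and a similar expansion of each cofactor $\Delta^+_{j,m}(\rho)$ yields polynomial-in-$\rho$ bounds for the quotients $\Delta^+_{j,m}/\Delta^+$ (times the extra factor $e^{\lambda_j^+}$ present for $j=1,2$). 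The key bookkeeping is that $d\tau/d\rho=3\rho^2-1$ and the change of variable $\tau=\rho^3-\rho$ converts the temporal Sobolev weight $\langle\tau\rangle^{\sigma_m/3}$ into $\rho^{\sigma_m}$; one must check that the polynomial blow-up of $\Delta^+_{j,m}/\Delta^+$, combined with the Jacobian, exactly matches the $\sigma_m$ of the corresponding boundary trace (consistent with the trace regularities $h_1\in H^{1/3}$, $h_2\in L^2$, $h_3\in H^{-1/3}$).

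The second step treats the three $j$-pieces separately. For $j=1$, $\lambda_1^+(\rho)=i\rho$ makes $U_{1,m}(x,t)$ a free-line Airy-type Fourier integral, so Plancherel in $t$ bounds $\sup_t\|U_{1,m}(\cdot,t)\|_{L^2(0,1)}$ and the standard Kato smoothing for the KdV group (carried out exactly as in \cite{BSZ03FiniteDomain}) controls the $L^2(0,T;H^1)$ norm. For $j=2$ and $j=3$, the exponential factors $e^{-\lambda_2^+(\rho)(1-x)}$ and $e^{\lambda_3^+(\rho)x}$ provide rapid Gaussian-type decay in $\rho$ uniformly away from the boundaries $x=1$ and $x=0$ respectively, which more than absorbs any polynomial growth from Step~1; sharp trace estimates at the relevant endpoint are handled by observing that the damping factor equals one there but the matching row of $A$ makes the quotient $\Delta^+_{j,m}/\Delta^+$ decay suitably. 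Summing over $(j,m)$ finishes the proof.

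The main obstacle is the precise exponent matching in Step~1: one must verify that for every pair $(j,m)$ the polynomial order of $\Delta^+_{j,m}(\rho)/\Delta^+(\rho)$, after accounting for the $e^{\lambda_j^+}$ factor present when $j\neq 3$, is exactly the one that allows Plancherel (with Jacobian $3\rho^2-1$ and Sobolev weight $\rho^{\sigma_m}$) to deliver the $Z_{0,T}$ bound. This is a careful but essentially algebraic computation using the explicit $3\times 3$ structure of $A$; everything else is routine Fourier analysis once the correct symbolic bounds are in hand.
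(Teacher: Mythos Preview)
Your approach is essentially the same as the paper's: both use the explicit representation from Lemma~2.1, compute the large-$\rho$ asymptotics of the ratios $\Delta_{j,m}^+(\rho)/\Delta^+(\rho)$ (the paper records these in a table), verify that the resulting symbols $Q_{j,m}^+h_m$ lie in $H^{(s+1)/3}(\mathbb{R})$, and then finish by invoking the oscillatory/exponentially-damped integral estimates of Propositions~2.7--2.9 in \cite{BSZ03FiniteDomain} (which are exactly the Plancherel/Kato-smoothing and damped-kernel arguments you sketch for $j=1$ and $j=2,3$ respectively). One minor wording slip: the quotients $\Delta_{j,m}^+/\Delta^+$ never blow up polynomially---they are at worst bounded and in several cases exponentially small---so your ``polynomial blow-up'' should read ``polynomial decay,'' but this only makes the exponent bookkeeping easier, not harder.
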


\noindent {\bf Proof:}    As
    \[ \lam_1^+(\rho)=i\rho, \quad  \lam_2^+(\rho)= \frac{\sqrt{3\rho ^2-4}-i\rho}{2} \quad
       \lam_3^+(\rho)= \frac{-\sqrt{3\rho ^2-4}-i\rho}{2}, \]
 the asymptotic behaviors  of the ratios $
\frac{\Delta_{j,m}^+(\rho)}{\Delta^+(\rho)}$  for $\rho\to + \infty$
are listed below.

\begin{center}
\renewcommand{\arraystretch}{2}
\begin{tabular}{||>{$}c<{$}| >{$}c<{$}|>{$}c<{$}||}\hline
\frac{\Delta_{1,1}^+(\rho)}{\Delta ^+(\rho)}  \sim
e^{-\frac{\sqrt{3}}{2}\rho} & \frac{\Delta_{2,1}^+(\rho)}{\Delta
^+(\rho)}
 \sim e^{-\sqrt{3}\rho} & \frac{\Delta_{3,1}^+(\rho)}{\Delta ^+(\rho)}  \sim 1 \\
\hline \frac{\Delta_{1,2}^+(\rho)}{\Delta ^+(\rho)} \sim \rho^{-1} &
\frac{\Delta_{2,2}^+(\rho)}{\Delta ^+(\rho)} \sim
\rho^{-1}e^{-\frac{\sqrt{3}}{2}\rho} & \frac{\Delta_{3,2}^+(\rho)}{\Delta ^+(\rho)}  \sim  \rho^{-1} \\
 \hline
\frac{\Delta_{1,3}^+(\rho)}{\Delta ^+(\rho)}  \sim  \rho^{-2}
&\frac{\Delta_{2,3}^+(\rho)}{\Delta ^+(\rho)}  \sim
 \rho^{-2}e^{-\frac{\sqrt{3}}{2}\rho}  & \frac{\Delta_{3,3}^+(\rho)}{\Delta ^+(\rho)}  \sim  \rho^{-2}
   \\ \hline
\end{tabular}
\end{center}

For $m=1,2,3$ and $j=1,2$, set
$$\h{h^*}_{3,m}^+(\rho):=[Q_{3,m}^+h_m](\rho) = \frac{\Delta_{3,m}^+(\rho)}{\Delta^+(\rho)}\h{h}_m^+(\rho)
$$
 and
 \[   \h{h^*}_{j,m}^+(\rho):=[Q_{j,m}^+h_m](\rho) = \frac{\Delta_{j,m}^+(\rho)}{\Delta
^+(\rho)}e^{\lam_j^+(\rho)}\h{h}^+_m(\rho)\] and view $h^{*}_{j,m}$
as the inverse Fourier transform of $\h{h^*}_{j,m}^+$. It is
straightforward to see   that for any $s\in \mathbb{R}$,
\begin{equation}\label{estimates} \left \{
\begin{array}{lll} h_1\in H_0^{(s+1)/3}(\mathbb{R}^+) &\Rightarrow h_{j,1}^* \in
H^{\frac{s+1}{3}} (\mathbb{R}), & \ j=1,2,3,\\ \\
  h_2\in H_0^{s/3}(\mathbb{R}^+) &\Rightarrow h^*_{j,2}\in
H^{\frac{s+1}{3}} (\mathbb{R}),& \ j=1,2,3,\\ \\
 h_3\in H_0^{(s-1)/3}(\mathbb{R}^+)&\Rightarrow h_{j,3}^* \in H^{\frac{s+1}{3}}
 (\mathbb{R}), & \ j=1,2,3.
 \end{array} \right .
 \end{equation}

%We rewrite $v_{j,m}^+ (x,t)$ for $m=1,2,3$  as
%\[ v_{3,m}^+ (x,t)= \ds\frac{1}{2\pi}   \int^{\infty}_1  e^{i(\rho^3-\rho) t}e^{\lam_3^+(\rho)x}
%    (3\rho^2-1) \h{h^*}_{3,m}^+(\rho)d\rho\]
%and for $j=1,2$,
%\[ v_{j,m}^+ (x,t)= \ds\frac{1}{2\pi}   \int^{\infty}_1  e^{i(\rho^3-\rho) t}e^{-\lam_j^+(\rho)(1-x)}
 %   (3\rho^2-1) \h{h^*}_{j,m}^+(\rho)d\rho .\]

%\begin{center}
\renewcommand{\arraystretch}{1.8}
%\begin{tabular}{||>{$}c<{$}| >{$}c<{$}| >{$}c<{$}||}\hline

 %   |h_{1,1}^*|   \sim C e^{-\frac{\sqrt{3}}{2}\rho} |\h{h}_1|\in H^{\infty}&
  %  |h_{1,2}^*|  \sim C \rho^{-1}|\h{h}_2| \in H^{\frac{2s+3}{6}}&
   % |h_{1,3}^*|  \sim C \rho^{-2}|\h{h}_3| \in H^{\frac{s+2}{3}}\\ \hline

    %|h_{2,1}^*|  \sim C \rho^{-3}|\h{h}_1| \in H^{\frac{2s+11}{6}}&
    %|h_{2,2}^*| \sim C \rho^{-1}|\h{h}_2| \in H^{\frac{2s+3}{6}} &
    %|h_{3,2}^*| \sim C \rho^{-2} |\h{h}_3| \in H^{\frac{s+2}{3}}\\ \hline

    %|h_{3,1}^*|=  | \h{h}_1|\in H^{(s+1)/3}&
    %|h_{3,2}^*| \in H^{\infty}&
    %||h_{3,3}^*| \sim \rho^{-2}|\h{h}_3| \in  \ H^{\frac{s+2}{3}} \\ \hline
%\end{tabular}
%\end{center}

The proof is completed by using the same argument as that used in
the proofs of Proposition 2.7, Proposition 2.8 and Proposition 2.9
in \cite{BSZ03FiniteDomain}. $\Box$

\medskip
Combining Proposition 2.2 and Proposition 2.3 leads to the following
estimates for solutions of the IBVP
 \begin{equation}\label{DSP}
        \begin{cases}
            v_t+v_x +v_{xxx}=f, \quad x \in (0,L)\\
            v(x,0)=\phi (x),\\
            v(0,t)=h_1(t), \ v_x(L,t)=h_2(t), \ v_{xx}(L,t)=h_3(t)
        \end{cases}
    \end{equation}
    \begin{prop} Let $T>0$ and $s\in [0, 3]$ with $s\ne \frac{j}{2},
    j=1,3,5 $ be given. There exists a constant $C>0$ such that for any given $s-$compatible $(\phi,
    \vec{h})\in D_{s,T}$ and $f\in W^{\frac{s}{3},1}(0,T; L^2 (0,L))$, the IBVP (\ref{DSP}) admits a unique
    solution $v\in Z_{s,T}\cap H^{\frac{s}{3}} (0,T; H^1 (0,L))$ satisfying
    \[ \| v\|_{Z_{s,T}\cap H^{\frac{s}{3}} (0,T; H^1 (0,L))}\leq
    C\left (\| (\phi , \vec{h})\|_{D_{s,T}} +\|f\|_{W^{\frac{s}{3},1}(0,T; L^2 (0,L))}\right ).\]
    \end{prop}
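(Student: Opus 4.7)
The plan is to establish the endpoint estimates at $s=0$ and $s=3$ and then recover the intermediate values by interpolation.

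By linearity, I would decompose $v = v_1 + v_2$, where $v_1$ solves (\ref{DSP}) with $\vec{h}\equiv 0$ and data $(\phi, f)$, while $v_2$ solves (\ref{DSP}) with $\phi\equiv 0$, $f\equiv 0$ and boundary data $\vec{h}$. The baseline estimate at $s=0$ then follows by combining Proposition \ref{2.1} applied to $v_1$ with Proposition \ref{LbdrProblem} applied to $v_2$; no compatibility is needed at this level since $L^2(0,L)$ has no boundary traces.

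For the high endpoint $s = 3$, I would bootstrap via time-differentiation. Setting $w = v_t$ (carried out first for smooth data and extended by density through the derived a priori bound), $w$ solves the same linear IBVP with initial datum $w(\cdot, 0) = f(\cdot, 0) - \phi' - \phi'''$, which lies in $L^2(0,L)$ because $\phi \in H^3(0,L)$ and $f \in W^{1,1}(0, T; L^2) \hookrightarrow C([0, T]; L^2)$; boundary data $(h_1', h_2', h_3') \in \mathbb{H}^0(0, T)$; and forcing $f_t \in L^1(0, T; L^2)$. Applying the $s = 0$ estimate to $w$ gives $v_t \in Z_{0, T}$. Reading the equation pointwise in $t$ as a third-order boundary-value problem in $x$,
\[ v_{xxx} + v_x = f - v_t, \quad v(0, t) = h_1(t), \; v_x(L, t) = h_2(t), \; v_{xx}(L, t) = h_3(t), \]
a routine ODE estimate lifts $v$ to $L^\infty(0, T; H^3)$, while coupling with $v_t \in L^2(0, T; H^1)$ upgrades this to $v \in L^2(0, T; H^4) \cap H^1(0, T; H^1)$, completing the $s = 3$ case. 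The continuity $v \in C([0, T]; H^3)$ then comes from a standard density argument.

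For general $s \in (0, 3) \setminus \{1/2, 3/2, 5/2\}$, I would interpolate between the two endpoints. The $s$-compatibility conditions (\ref{comp1})--(\ref{comp3}) are arranged precisely so that the subspace of compatible tuples in $D_{s, T}$ coincides with the complex interpolation space between $D_{0, T}$ and the $3$-compatible subspace of $D_{3, T}$, in the spirit of Lions-Magenes trace theory. The target space is identified similarly, and in particular the time-regularity component comes from $[L^2(0, T; H^1), H^1(0, T; H^1)]_{s/3} = H^{s/3}(0, T; H^1)$. The excluded values $s = 1/2, 3/2, 5/2$ are exactly those at which the relevant trace operators on $H^s(0, L)$ or $H^{(s+1)/3}(0, T)$ cease to be bounded, so the compatibility structure changes character across them and the interpolation identity must be set up separately.

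The main obstacle I anticipate is this interpolation step: correctly identifying the interpolation spaces of data tuples subject to the compatibility conditions (\ref{comp1})--(\ref{comp3}) and verifying that the solution map restricts continuously to these closed subspaces. This is precisely the content of the arguments from Propositions 2.7--2.9 of \cite{BSZ03FiniteDomain} that the authors invoke to close the proof. A secondary technicality is making the time-differentiation step at $s=3$ rigorous, since the identity $w = v_t$ and the induced IBVP for $w$ should first be justified at smooth data and then extended by density.
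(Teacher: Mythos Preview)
Your proposal is correct and follows essentially the same route as the paper: establish the endpoint estimates at $s=0$ (by combining Propositions~\ref{2.1} and~\ref{LbdrProblem}) and at $s=3$ (by time-differentiating, applying the $s=0$ bound to $w=v_t$, and reading off $v_{xxx}=f-v_t-v_x$), then interpolate for intermediate $s$. If anything, your write-up is more careful than the paper's sketch---you correctly identify the initial datum for $w$ as $f(\cdot,0)-\phi'-\phi'''$ (the paper has a typo writing $f_t(\cdot,0)$), you address the $H^{s/3}(0,T;H^1)$ component explicitly, and you flag the genuine subtlety of interpolating data spaces subject to compatibility constraints, which the paper simply defers to \cite{BSZ03FiniteDomain}.
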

    \noindent
    {\bf Proof:}  We only prove it holds for $s=0$ and $s=3$. The other case of $s$ follows by standard interpolation.
    Note that this proposition holds for $s=0$ because of
    Proposition 2.2 and Proposition 2.3. To see it holds for $s=3$,
    let $w=v_t$. Then $w$ solves
    \[ \begin{cases}
            w_t+w_x +w_{xxx}=f_t, \quad x \in (0,L)\\
            w(x,0)=\phi ^* (x),\\
            w(0,t)=h_1'(t), \ w_x(L,t)=h_2'(t), \ w_{xx}(L,t)=h_3'(t)
        \end{cases} \]
with \[\phi ^ *(x) =f_t (x,0)- \phi '''(x)-\phi '(x).\] Thus
\[ \|v_t \|_{Z_{0,T}} =\| w\|_{Z_{0,T}} \leq C\left (\| f\|_{W^{1,1}(0,T; L^2 (0,L))}
+\| (\phi , \vec{h})\|_{X_{3,T}}\right ) .\]  Since
\[ v_{xxx}=f-v_t -v_{xxx} -v_x, \]
we obtain further that
\[ \| v\|_{Z_{3,T}} \leq C\left (\| f\|_{W^{1,1}(0,T; L^2 (0,L))}
+\| (\phi , \vec{h})\|_{D_{3,T}}\right ) .\] The proof is complete.
\hfill$\Box$

Proposition 2.4  will be sufficient for us to obtain the local
well-posedness of the IBVP (\ref{PP}) int the space $H^s(0,L)$ for
$s\geq  0$. However, to obtain its well-posedness in the space $H^s
(0,L)$ with $s < 0$, we need to extend the problem posed on the
finite domain $(0,L)\times (0,T)$ to an equivalent problem posed on
the whole plane $\mathbb{R}\times \mathbb{R}$ in order to use
Bourgain space analysis.

First recall the solution of the following linear KdV equation,
\begin{equation}\label{RP}
        \begin{cases}
            v_t+v_x   +v_{xxx}=0, \qquad x \in \r,  \ t\in \r^+\\
            v(x,0)=\psi,\\
        \end{cases}
\end{equation}
has the explicit form
\begin{equation}\label{RSol}
v(x,t)= W_{\r} (t)]\psi(x)= c \int_{\r} e^{i(\xi^3-\xi)t }e^{ix \xi
} \hat{\psi} (\xi)d\xi
\end{equation}
Here $\hat{\psi }$ denotes the Fourier transform of $\psi $.

Taking advantage of this simplicity as it is done in
\cite{BSZ03FiniteDomain}, we rewrite $W_0(t)$ in term of $W_{\r}(t)$
and $W_{bdry}(t)$ as follows. For any $\phi \in H^s (0,L)$, let
$\phi ^* \in H^s (\r)$ be its standard extension from $(0,L)$ to
$\r$.  Let $v=v(x,t)$ is the solution of
\begin{equation*}
        \begin{cases}
            v_t+v_x   +v_{xxx}=0, \quad x\in \r, \ t\ge0\\
            v(x,0)=\phi^*,\\
        \end{cases}
\end{equation*}
and set  $g_1(t)=v(0,t), \  g_2(t)=v_x(L,t)$ and $g_3(t)=v_{xx}
(L,t)$, $\v{g}=(g_1,g_2,g_3)$ and
$$v_{\v{g}}=v_{\v{g}}(x,t)=[W_{bdr}(t)\v{g}](x),$$
which is the corresponding solution of the nonhomogeneous
boundary-value problem \ref{SP} with boundary data $h_j(t)=g_j(t)$
for $j=1,2,3$ and $t\ge 0.$ Then  $v(x,t) - v_{\v{g}}$ solves the
IBVP (\ref{LSP}). This leads us thus
a particular representation of $W_0(t)$ in terms of  $W_{bdr}(t)$ and $W_{\r}(t)$.\\

Let $B: H^s(0,L) \to H^s(\r)$ be a standard extension operator from
$H^s(0,L)$ to $H^s(\r)$.
\begin{lem}\label{extension}
Given $s\in \r$  and $\phi\in H^s(0,L)$, let $\phi^*=B\phi$. Then
\begin{equation}\label{decom}
    W_0(t)\phi= W_{\r}(t)\phi^*-W_{bdr}(t)\v{g}
\end{equation}
for any  $t>0$ and $x\in (0,L)$, where $\v{g}$ is obtained from the
trace of $W_{\r}(t)\phi^*$ at $x=0,L.$
\end{lem}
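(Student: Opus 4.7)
The plan is to verify the identity by showing that both sides solve the same homogeneous linear IBVP on $(0,L)\times \r^+$ and then appealing to uniqueness. First, I would set $v(x,t):=[W_{\r}(t)\phi^*](x)$, so that $v$ satisfies the free linear KdV equation $v_t+v_x+v_{xxx}=0$ on all of $\r\times \r^+$ with initial datum $\phi^*$. Since $\phi^*|_{(0,L)}=\phi$, the restriction of $v$ to $(0,L)\times \r^+$ satisfies the PDE in (\ref{LSP}) with initial value $\phi$, but in general with nonzero boundary traces $g_1(t)=v(0,t)$, $g_2(t)=v_x(L,t)$, $g_3(t)=v_{xx}(L,t)$.

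Next, I would form $\v{g}=(g_1,g_2,g_3)$ and set $v_{\v{g}}(x,t):=[W_{bdr}(t)\v{g}](x)$, which by the representation formula derived in Subsection~2.1 is the solution of the nonhomogeneous boundary-value problem (\ref{SP}) with zero initial datum and boundary data $\v{g}$. The difference $u:=v-v_{\v{g}}$, viewed on $(0,L)\times \r^+$, then solves $u_t+u_x+u_{xxx}=0$ with initial value $\phi$ and, by construction of $\v{g}$, with homogeneous boundary conditions $u(0,t)=0$, $u_x(L,t)=0$, $u_{xx}(L,t)=0$. In other words, $u$ is a solution of the homogeneous IBVP (\ref{LSP}) with $f=0$.

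By the standard $C_0$-semigroup theory recalled just before Proposition~\ref{2.1}, this IBVP has a unique solution $W_0(t)\phi$ in $C([0,T];L^2(0,L))$ (and in $C([0,T];H^s(0,L))$ more generally via duality and density). Consequently $u=W_0(t)\phi$ on $(0,L)\times \r^+$, which is exactly (\ref{decom}).

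The one step that I expect to require genuine care is the justification of the boundary traces when $s<0$. In that regime $\phi^*$ is only a distribution in $H^s(\r)$, and the pointwise evaluations $v(0,t)$, $v_x(L,t)$, $v_{xx}(L,t)$ are not classical. To give them meaning one must invoke sharp Kato-type smoothing and trace estimates for the whole-line group $W_{\r}(t)$ to place $g_1\in H^{(s+1)/3}_{loc}(\r^+)$, $g_2\in H^{s/3}_{loc}(\r^+)$ and $g_3\in H^{(s-1)/3}_{loc}(\r^+)$, so that $\v{g}\in \mathbb{H}^s(0,T)$ and the boundary operator $W_{bdr}$ can be applied. The identity (\ref{decom}) is then first verified for smooth $\phi$, where all traces are classical, and extended to arbitrary $\phi\in H^s(0,L)$ by density together with the continuity of $W_0$, $W_{\r}$, the trace maps, and $W_{bdr}$ in the appropriate topologies.
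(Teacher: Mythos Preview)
Your argument is exactly the one the paper uses: the discussion immediately preceding the lemma sets up $v=W_{\r}(t)\phi^*$, defines $\v{g}$ as its boundary traces, and observes that $v-v_{\v{g}}$ solves the homogeneous IBVP (\ref{LSP}), whence it equals $W_0(t)\phi$ by uniqueness. Your additional remarks on justifying the traces for $s<0$ via the Kato smoothing estimates (which appear later as Lemma~\ref{xxx}) and a density argument go beyond what the paper spells out but are consistent with its framework.
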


The solution of the non-homogeneous initial boundary-value problem
\begin{equation}\label{FSP}
        \begin{cases}
            v_t+v_x   +v_{xxx}=f(x,t), \qquad x \in (0,L),  \ t\geq 0\\
            v(x,0)=0, \\ v(0,t)=0, \qquad v_x(L,t)=0,\qquad
            v_{xx}(L,t)=0
        \end{cases}
\end{equation}
 can also be expressed in terms of $W_{\r} (t)$ and
$W_{bdr} (t)$.

\begin{lem}\label{Forcedext}
If $f^*(.,t)=Bf(.,t)$, with $B$ as was defined before the extension
of $f$ from $[0,L]\times \r^+ \to \r \times \r^+$, then the solution
$u$ of the extended problem (\ref{FSP}) is
$$v(x,t)=\int ^t_0 W_0 (t-\tau ) f(\tau ) d\tau =\int_0^t W_{\r}(t-\tau)f^*(.,\tau) d\tau - W_{bdr}(t)\v{v}$$
for any $x\in (0,L)$ and $t\ge 0$ where $\v{v}\equiv
\v{v}(t)=(v_1(t),v_2(t),v_3(t))$ is the appropriate boundary traces
of
$$q(x,t)=\int_0^t W_{\r} (t-\tau) f^*(\tau) d\tau$$
at $x=0,L$ i.e.  $$v_1(t)=q(0,t), \ v_2(t)=q_x(1,t),\
v_3(t)=q_{xx}(L,t)$$
\end{lem}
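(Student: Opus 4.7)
The plan is to verify directly that the proposed right-hand side solves the IBVP (\ref{FSP}), and then conclude by uniqueness of that problem (which itself follows from Proposition \ref{2.1} applied to the difference of any two solutions with $\phi\equiv 0$ and $f\equiv 0$). That is, I would set
\[
\tilde v(x,t) := q(x,t) - [W_{bdr}(t)\v{v}](x,t)
\]
with $q$ and $\v{v}$ as in the statement, and check the equation, the initial condition, and all three boundary conditions.

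For the equation, $q$ is the Duhamel solution of the whole-line linear KdV with source $f^*$, so $q_t+q_x+q_{xxx}=f^*$ on $\r\times\r^+$; restricting to $x\in(0,L)$ gives $q_t+q_x+q_{xxx}=f$ because $f^*=Bf$ agrees with $f$ on $(0,L)$. On the other hand, by Lemma 2.4 the boundary integral operator $W_{bdr}(t)\v{v}$ produces a solution of the homogeneous linear equation with zero initial data and prescribed boundary data $\v{v}$, so $[W_{bdr}\v{v}]_t+[W_{bdr}\v{v}]_x+[W_{bdr}\v{v}]_{xxx}=0$. Subtracting yields $\tilde v_t+\tilde v_x+\tilde v_{xxx}=f$ on $(0,L)\times\r^+$. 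For the initial condition, $q(x,0)=0$ as a Duhamel integral from $0$ to $0$, and $[W_{bdr}(0)\v{v}](x)=0$ by construction of $W_{bdr}$, so $\tilde v(x,0)=0$.

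For the boundary conditions, $\v{v}$ is defined precisely so that its components equal the traces $q(0,\cdot)$, $q_x(L,\cdot)$, $q_{xx}(L,\cdot)$. Since $W_{bdr}\v{v}$ solves (\ref{2-1}) with boundary data $\vec h=\v{v}$, its traces $[W_{bdr}\v{v}](0,t)$, $[W_{bdr}\v{v}]_x(L,t)$, $[W_{bdr}\v{v}]_{xx}(L,t)$ coincide with $v_1,v_2,v_3$, i.e.\ with those of $q$. Hence $\tilde v(0,t)=0$, $\tilde v_x(L,t)=0$, $\tilde v_{xx}(L,t)=0$. Uniqueness of (\ref{FSP}) then gives $\tilde v=v=\int_0^t W_0(t-\tau)f(\tau)d\tau$, which is precisely the asserted identity.

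The only point requiring care will be the regularity needed to give the traces $q(0,\cdot)$, $q_x(L,\cdot)$, $q_{xx}(L,\cdot)$ a rigorous meaning when $f$ is rough. The cleanest route is to prove the identity first for $f\in C_c^\infty((0,L)\times(0,\infty))$, where every trace is classical and both sides are smooth functions, and then to pass to a general $f\in L^1(0,T;L^2(0,L))$ (or whatever class is in force) by density. The required continuity of $q\mapsto (q(0,\cdot),q_x(L,\cdot),q_{xx}(L,\cdot))$ into $\mathbb H^s(0,T)$ is the Kato-type sharp Kato smoothing for the group $W_{\r}(t)$, while the continuity of $\v{v}\mapsto W_{bdr}(t)\v{v}$ into $Z_{s,T}$ is Proposition \ref{LbdrProblem}. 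Together these reduce the lemma to the algebraic identity on a dense subspace, which is exactly what the verification above provides; no other obstacle arises.
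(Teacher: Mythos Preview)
Your argument is correct and is exactly the approach the paper takes: the paper does not spell out a separate proof for this lemma, but the paragraph preceding Lemma~\ref{extension} gives precisely the subtract-the-boundary-corrector reasoning you reproduce (there for $W_0(t)\phi$; here you carry out the obvious forced analogue). Your closing remark on establishing the identity first for smooth compactly supported $f$ and then passing to the limit via Kato smoothing and Proposition~\ref{LbdrProblem} is more explicit than anything the paper writes, but entirely in its spirit.
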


Lemma  \ref{extension} and Lemma \ref{Forcedext} are validated  when
$x\in (0,L)$ and $t\ge0$ since some of the operators that we have
constructed are defined only in this interval,
 moreover the only operator that is defined in the whole line is $W_{\mathbb{R}}(t)$ for any  values of $x$ and $t$.
  In the equation  (\ref{decom}), the left hand side is defined for
  all $x\in \r$ but the right hand side is defined just in $(0,L)$.  Since we want to use the Bourgain
  Spaces, we need to extend  the operator of the right hand side.

Recall that
\[ W_{bdr}(t)\vec{h}= \sum _{j, m=1}^3 W_{j,m } h_j \]
and each $ W_{j, m} h_j $ is  either of the form (see Lemma 2.1)

\begin{equation}\label{extWbdr}
[U_{bdr}^1(t)]h(x)= \frac{1}{2\pi}  \mathfrak{Re}  \int_{1}^{\infty}
e^{i t(\mu^3-\mu)} e^{\frac{-\sqrt{3\mu^2
-4}-i\mu}{2}x}(3\mu^2-1)\h{h}(\mu)d\mu
\end{equation}
or of the form
\begin{equation}\label{extWbdr-1}
[U_{bdr}^2(t)]h(x)= \frac{1}{2\pi}  \mathfrak{Re}  \int_{1}^{\infty}
e^{i t(\mu^3-\mu)} e^{i\mu x}(3\mu^2-1)\h{h}(\mu)d\mu
\end{equation}
 where  $\h{h}(\mu)=h(i(\mu^3-\mu))$. Therefore by the extension method introduced in \cite{BSZ06},  the
operator $W_{bdr} (t)$  can be extended as   ${\cal W}_{bdr} (t)$
with
\[ [{\cal W}_{bdr} (t)\vec{h}] (x,t)\]
 defined for any $t, \ x  \in \r $ and \[ [{\cal W}_{bdr}
(t)\vec{h}] (x,t)= [W_{bdr}\vec{h}] (x,t) \ for \ any \ (x,t)\in
(0,L)\times (0,T).\]  Moreover, the following estimates hold.

\begin{prop}
For given $\alpha > \frac12$ and  $(b,s)$ such that $s\le 0$ and
$b<1/2$ satisfying $$ 0\le b<1/2-s/3,$$ there exists a constant $C$
such that for any $T>0$ and any $\h{h}\in \H^s(0,T)$,
\[ {\cal W}_{bdr}\h{h} \in C([0,T]; H^s (0,L))\cap X^{\alpha,T}_{s,b} \]
and
$$\|{\cal W}_{bdr}\h{h}\| _{C([0,T]; H^s (0,L))\cap X^{\alpha,T}_{s,b}  }\le C\| \v{h}\|_{\H^s(0,T)}.$$
\end{prop}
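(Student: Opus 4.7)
The plan is to reduce the estimate to its nine scalar summands via Lemma 2.1, namely $\mathcal{W}_{bdr}\vec h(x,t)=\sum_{j,m=1}^3 \mathcal{W}_{j,m}h_m(x,t)$, and for each $(j,m)$ bound the corresponding summand separately. Because the exponents $\lambda_j^+(\rho)$ split naturally into an oscillatory mode $j=1$ (with $\lambda_1^+=i\rho$) and two exponentially decaying modes $j=2,3$ (whose real parts are $\pm\sqrt{3\rho^2-4}/2$, so one decays in $x$ and the other in $L-x$), each $\mathcal{W}_{j,m}h_m$ fits the template (\ref{extWbdr}) or (\ref{extWbdr-1}) after trivial reflections. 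The asymptotic table recorded in the proof of Proposition \ref{LbdrProblem} tells us that for each $(j,m)$ the ratio $\Delta_{j,m}^+(\rho)/\Delta^+(\rho)$ behaves like $\rho^{-(m-1)}$ times a factor which is either bounded or exponentially small in $\rho$; thanks to (\ref{estimates}) the associated auxiliary function $h^*_{j,m}$, whose Fourier transform along $\tau=\rho^3-\rho$ encodes $[Q_{j,m}^+h_m](\rho)$, always belongs to $H^{(s+1)/3}(\r)$ with norm controlled by $\|\vec h\|_{\mathbb H^s(0,T)}$.

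Next I would analyze each of the two archetypes. For the oscillatory type $U^2_{bdr}$, the substitution $\xi=\mu$, $\tau=\mu^3-\mu$ shows that $U^2_{bdr}h$ coincides on $\r\times\r$ with a standard Fourier multiplier supported on the curve $\tau=\xi^3-\xi$: one writes
\[
[U^2_{bdr}h](x,t)=\mathfrak{Re}\int_{|\xi|\ge 1} e^{it(\xi^3-\xi)}e^{ix\xi}\,\widetilde h(\xi)\,d\xi,
\]
where the Jacobian $3\mu^2-1$ is exactly absorbed so that $\widetilde h$ is, up to $\r$-extension by zero on $|\xi|<1$, an $H^{(s+1)/3}$ function. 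Plancherel yields at once the $C_b(\r_t;H^s(\r_x))$ bound and, because $\widetilde v(\xi,\tau)$ is a measure on the curve $\tau=\xi^3-\xi$, the $\Lambda_{s,b}$ contribution requires a time localization $\psi(t/\delta)$ and the standard Bourgain lemma giving a factor $\delta^{1/2-b}$ (legitimate thanks to $b<1/2$); handling the switch $\xi^3-\xi\leftrightarrow\xi^3+\xi$ in the weight is routine. For the exponentially decaying type $U^1_{bdr}$, the extension to $\r$ is done as in \cite{BSZ06} by letting $x$ range freely (the exponential decay is used for $x>0$ and a mirrored cutoff for $x<0$); after Plancherel in $t$ and a change of variable, the $X_{s,b}$ estimate reduces to showing that
\[
\int_\r\!\!\int_\r \frac{\langle\xi\rangle^{2s}\langle\tau-(\xi^3+\xi)\rangle^{2b}}{|3\rho(\tau)^2-1|^{2}\,\bigl|\lambda_j^+(\rho(\tau))+i\xi\bigr|^{2}}\,|\widehat{h^*_{j,m}}(\tau)|^2\,d\xi\,d\tau
\]
is bounded by $\|h^*_{j,m}\|_{H^{(s+1)/3}}^2$; the $\xi$-integral is elementary once one uses $\mathrm{Re}\,\lambda_j^+(\rho)\gtrsim\langle\rho\rangle$, and the surviving $\tau$-weight combines with $\langle\tau\rangle^{(s+1)/3}$ under the constraint $b<1/2-s/3$.

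The main obstacle is packaging all of this into the \emph{full} $X^{\alpha,T}_{s,b}$ norm rather than just $X_{s,b}^T$: the extra piece $\lambda_\alpha$ controls the low-frequency part $|\xi|\le 1$ in a Sobolev-$\alpha$ way in $\tau$, and requires $\alpha>1/2$ to close a Cauchy–Schwarz. For the oscillatory mode the low-frequency regime is harmless because one can truncate $\mu\ge 1$ in (\ref{extWbdr-1}) and use smoothness of $\mu\mapsto e^{i\mu x}(3\mu^2-1)$ near the threshold, at the cost of a factor $\langle\tau\rangle^{\alpha}$ which is integrable against $\langle\tau\rangle^{-2(1-\alpha)}$ precisely when $\alpha>1/2$; for the decaying mode the exponential weight in $\rho$ provides arbitrarily strong decay in $\tau$ that dominates any $\langle\tau\rangle^\alpha$. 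Combined with a standard time cutoff $\psi\in C_c^\infty(\r)$ with $\psi\equiv 1$ on $[0,T]$ (producing the restriction to $C([0,T];H^s(0,L))$ and the quotient norm on $X^{\alpha,T}_{s,b}$), and invoking the fact that the two decaying modes are symmetric under $x\mapsto L-x$, one obtains the claimed bound $\|\mathcal W_{bdr}\vec h\|_{C([0,T];H^s(0,L))\cap X^{\alpha,T}_{s,b}}\le C\|\vec h\|_{\mathbb H^s(0,T)}$ with $C$ independent of $T>0$.
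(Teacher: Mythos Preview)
Your outline matches the paper's approach, which gives no detailed proof but simply invokes the extension method of \cite{BSZ06}: decompose via Lemma~2.1 into the nine $W_{j,m}$ pieces, sort them into the oscillatory template (\ref{extWbdr-1}) and the exponentially damped template (\ref{extWbdr}), and use the asymptotic table together with (\ref{estimates}) to reduce each piece to the model estimates proved in \cite{BSZ06}. Your treatment of the $\lambda_\alpha$ piece is slightly off---for the oscillatory template the spatial Fourier support already lies in $|\xi|\ge 1$, so the low-frequency norm is controlled trivially rather than by the integrability argument you describe, and for the damped templates the claimed ``exponential weight in $\rho$'' is not uniformly available (after the $e^{\lambda_j^+}$ factor in $Q_{j,m}^+$ most entries reduce to polynomial decay), so the low-frequency bound must come instead from the $\xi$-integral of $|\lambda_j^+(\rho)+i\xi|^{-2}$---but these are local fixes and the overall strategy is the intended one.
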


%\begin{cor}\label{cor1}
%For any  $T>0$ and $s\ge -3/2$, there exists a constant $C$ such
%that
%\begin{equation}\label{bdrestimate}
%\| W_{bdr}\v{h}\|_{\2(0,T; H^{s+1}(0,L))}\le C \|
%\v{h}\|_{\mathbb{H}^{s} (0,T)}
%\end{equation}
%for all $\v{h}\in \mathbb{H}^s (0,T)$.
%\end{cor}

%Given $s\in\r,  \ b\in[0,1]$ and any function $\omega\equiv
%\omega(x,t):\r\times\r \to \r,$
%$$\Xi_{s,b}(w)= \left( \int_{-\i}^{\i} \int_{-\i}^{\i} \< i(\tau -(\xi^3-\xi))  \>^{2b}
%\<\xi\>^{2b} |\h{\omega}(\xi,\tau)|^2 d\xi d\tau   \right)^{1/2}$$
%with  $\< \  .\ \>=(1+|\ .\ |^2)^{1/2}$.  \\

%Defining
%\begin{equation}\label{xsb}
%X_{x,b}=\ds \left\{ \omega :  \| \omega\|_{s,b}:= \Xi_{s,b}(w)<\i \right\}
%\end{equation}

%and
%\begin{equation}\label{norm}
%{\cal Y}_{s,b}\equiv C(\r: H^s(\r))\cap Y_{s,b}
%\end{equation}
%with  the norm

%\begin{equation}\label{norma}
%\|\omega \|_{{\cal Y}_{s,b}}=\left( \sup_{t\in\r} \|
%\omega(.,t)\|_{H^s(\r)}^2 + \| \omega\|_{Y_{s,b}}^2\right)^{1/2}.
%\end{equation}

%The semigroup associated to the linear KdV equation posed on the
%whole line $\r$ for any $\phi \in \mathcal{S}'$ (where
%$\mathcal{S}'$ is the dual of  schwartz Space  or tempered
%distributions space)  is $ W_r(t)\phi$  and satisfies  for $t\ge 0$
%$$ \F_x (W_R(t)\phi)(\xi)= \exp[-3 \xi^2 t + i (\xi^3-4\xi)]\h{\phi}(\xi)$$
%extending this definition to the whole line, $t\in \r$
%$$ \F_x (W_R(t)\phi)(\xi)= \exp[-3 \xi^2 |t| + i (\xi^3-4\xi)]\h{\phi}(\xi)$$

The following lemmas are important in establishing the
well-posedness of of the IBVP (\ref{PP}) in $H^s (0,L)$ with $s<0$
whose proofs can be found in \cite{KPV93b,ColKen02, Holmer06,
BSZ06}.

\begin{lem}\label{Molinet}
Let  $-\i < s < \i, \ 0 < b\le 1, \ \frac12 <\alpha \leq 1$,  and
$\psi \in C^{\infty}_0 (\r)$ be given. There exists a constant $C$
depending only on $s$,  $\alpha $, $b$  and $\psi $ such that

    \begin{equation}
         \| \psi(t) W_{\r}(t) \phi \|_{{\cal X}_{s,b}^{\alpha}}\le C\| \phi\|_{H^s(\r)}
    \end{equation}
and
      \begin{equation}  \left\| \psi(t) \int_0^t W_{\r} (t-t') f(t')dt' \right\|_{{\cal X}_{s,b}^{\alpha}}\le C_{\delta} \|f
        \|_{Y_{s,1-b}^{1-\alpha}}
    \end{equation}
\end{lem}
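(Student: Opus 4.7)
The plan is to prove both estimates by direct computation on the spacetime Fourier side, treating separately the "high-frequency" Bourgain piece measured by $\Lambda_{s,b}$ (or $\mathcal{Q}_{s,b}$) and the "low-frequency" pieces measured by $\lambda_{\alpha}$ and $\mathcal{P}_{\alpha}$, plus the easy $\sup_t H^s$ part coming from the $\mathcal{C}(\mathbb{R};H^s)$ factor of $\mathcal{X}^{\alpha}_{s,b}$.

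For the homogeneous estimate, I first compute
\[
\widehat{\psi W_{\r}(\cdot)\phi}(\xi,\tau)=\hat{\psi}\bigl(\tau-(\xi^3-\xi)\bigr)\,\hat{\phi}(\xi).
\]
Plugging this into the definition of $\Lambda_{s,b}$ gives an integral in $\tau$ of $\langle\tau-(\xi^3-\xi)\rangle^{2b}|\hat{\psi}(\tau-(\xi^3-\xi))|^2$, which is finite because $\psi\in C_0^{\infty}$ makes $\hat{\psi}$ Schwartz; Fubini then yields $\|\phi\|_{H^s(\r)}^2$ times a constant depending on $\psi$ and $b$. For the low-frequency piece $\lambda_{\alpha}$, restrict to $|\xi|\le 1$, where $|\xi^3-\xi|\le 2$ so $\langle\tau\rangle\lesssim \langle\tau-(\xi^3-\xi)\rangle$, and the same Schwartz decay of $\hat{\psi}$ absorbs the weight $\langle\tau\rangle^{2\alpha}$ for any finite $\alpha$. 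The $\sup_t\|\psi(t)W_{\r}(t)\phi\|_{H^s}$ bound is immediate from unitarity of $W_{\r}(t)$ on $H^s(\r)$.

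For the Duhamel estimate, I would follow the Kenig--Ponce--Vega trick: write $\psi(t)$ via a smooth cutoff $\eta$ with $\eta=1$ on $\mathrm{supp}\,\psi$, and use the Fourier representation
\[
\int_0^t W_{\r}(t-t')f(t')\,dt'=c\int_{\r}e^{ix\xi}\int_{\r}\frac{e^{it\tau}-e^{it(\xi^3-\xi)}}{i(\tau-(\xi^3-\xi))}\hat{f}(\xi,\tau)\,d\tau\,d\xi.
\]
Split the inner $\tau$-integral into the region $\{|\tau-(\xi^3-\xi)|\ge 1\}$, controlled by $\mathcal{Q}_{s,1-b}$, and the region $\{|\tau-(\xi^3-\xi)|\le 1\}$, where one expands $e^{it\tau}-e^{it(\xi^3-\xi)}$ in a Taylor series in $t(\tau-(\xi^3-\xi))$. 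On the first region the estimate reduces, after multiplying by $\psi(t)$ and taking spacetime Fourier transforms, to the analogue of the homogeneous bound with a $\langle\tau-(\xi^3-\xi)\rangle^{-1}$ gain, matching the $1-b$ weight on $f$. On the second region each Taylor term has the form $\psi(t)(it)^{k}W_{\r}(t)g_k$ with $\hat{g}_k$ controlled by $\mathcal{G}_{s}(f)$, and one invokes the homogeneous bound again after noting $\psi(t)t^k\in C_0^{\infty}$.

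The low-frequency $\lambda_{\alpha}$ piece is where the parameter $\alpha>\tfrac12$ enters and is the main obstacle: for $|\xi|\le 1$ the phase $\xi^3-\xi$ is bounded, so the denominator $\tau-(\xi^3-\xi)$ is comparable to $\tau$ only away from a bounded region, and the correct comparison requires splitting according to $|\tau|\lessgtr 10$. On $|\tau|\le 10$ the boundedness of everything together with the fact that $\alpha<1$ makes the bound trivial; on $|\tau|\ge 10$ one has $\langle\tau\rangle\sim\langle\tau-(\xi^3-\xi)\rangle$ and Cauchy--Schwarz in $\tau$ costs exactly the weight $(1+|\tau|)^{-2(1-\alpha)}$ appearing in $\mathcal{P}_{\alpha}(f)$, which is integrable precisely because $\alpha>\tfrac12$. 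Combining the two regions and the two pieces of the Duhamel decomposition, and finally reading off the $\sup_t\|\cdot\|_{H^s}$ bound from the previous estimates via the embedding ${\cal X}^{\alpha}_{s,b}\hookrightarrow \mathcal{C}(\r;H^s)$ when $\alpha>\tfrac12$, completes the proof. The computational grind consists entirely of these Schwartz-weight estimates and the Taylor-expansion bookkeeping, and no new idea beyond \cite{KPV93b, ColKen02, Holmer06, BSZ06} is needed.
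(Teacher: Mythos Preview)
Your sketch is essentially correct and follows exactly the route the paper intends: the paper does not prove this lemma at all but simply refers to \cite{KPV93b,ColKen02,Holmer06,BSZ06}, and what you outline---direct Fourier computation for the homogeneous piece, the Kenig--Ponce--Vega splitting of the Duhamel kernel into $|\tau-(\xi^3-\xi)|\gtrless 1$ with Taylor expansion on the near region, and separate treatment of the low-frequency weights $\lambda_\alpha$ and $\mathcal{P}_\alpha$---is precisely the argument found in those references.

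One small imprecision: in the last sentence you claim to recover the $\sup_t\|\cdot\|_{H^s}$ part ``via the embedding $\mathcal{X}^{\alpha}_{s,b}\hookrightarrow C(\r;H^s)$.'' That embedding is tautological (the $C(\r;H^s)$ norm is part of the definition of $\mathcal{X}^{\alpha}_{s,b}$), and $X^{\alpha}_{s,b}$ alone does \emph{not} embed into $C(\r;H^s)$ when $b\le\frac12$, which is the regime of interest. The correct way to handle the $\sup_t H^s$ bound for the Duhamel term is to estimate it directly on each piece of your decomposition: on the Taylor pieces it reduces to the homogeneous estimate applied to $\psi(t)t^k W_{\r}(t)g_k$ with $\|g_k\|_{H^s}\le C\,\mathcal{G}_s(f)$, and on the far-region piece one bounds $\sup_t\|\cdot\|_{H^s}$ by $\mathcal{G}_s(f)$ straight from the integral representation. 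This is routine and is how the cited references proceed, so your argument goes through once this step is stated properly.
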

Next we present the  spatial trace estimates of $W_{\r}(t)\phi$ and
$ \int_0^t W_{\r}(t-t') f(\cdot ,t')dt'$ whose proofs can be found
in \cite{ColKen02, Holmer06}
\begin{lem}\label{xxx}
Let $s \in [-1,2]$ be given. There exists a constant $C$ depending only on $s$ such that

\begin{equation}\label{xx13}
\sup_{x\in\r}\| W_{\r}(t)\phi \|_{H_t^{\frac{s+1}{3}}(\r)}\le \|\phi
\|_{H^s}(\r),
\end{equation}
\begin{equation}\label{xxx13}
\sup_{x\in\r}\| \partial_x W_{\r} (t)\phi
\|_{H_t^{\frac{s}{3}}(\r)}\le \|\phi \|_{H^s}(\r)
\end{equation}
and
\begin{equation}\label{xxx13}
\sup_{x\in\r}\| \partial_{xx} W_{\r} (t)\phi
\|_{H_t^{\frac{s-1}{3}}(\r)}\le \|\phi \|_{H^s}(\r)
\end{equation}
\end{lem}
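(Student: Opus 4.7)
The plan is a direct Fourier-analytic computation, modelled on the Kato-type smoothing proofs of Kenig--Ponce--Vega \cite{KPV93b} and their extensions in \cite{ColKen02, Holmer06}. Starting from the explicit representation
\[
W_\r(t)\phi(x)=c\int_{\r}e^{i(\xi^3-\xi)t}e^{ix\xi}\hat\phi(\xi)\,d\xi,
\]
I would, for each fixed $x\in\r$, take the $t$-Fourier transform of $u(x,\cdot)$ and perform the substitution $\tau=\xi^3-\xi$. The cubic has critical points at $\xi=\pm 1/\sqrt{3}$, so $\r$ splits into three monotonic branches $\xi_j(\tau)$; on each branch $d\tau=(3\xi^2-1)\,d\xi$, giving the distributional identity
\[
\widetilde u(x,\tau)=c\sum_{j}\frac{\hat\phi(\xi_j(\tau))\,e^{ix\xi_j(\tau)}}{|3\xi_j^2(\tau)-1|}.
\]

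Combining $|\sum_j a_j|^2\le 3\sum_j|a_j|^2$ with Plancherel and reversing the change of variables on each branch reduces the first estimate to proving
\[
\int_\r \frac{\langle \xi^3-\xi\rangle^{2(s+1)/3}}{|3\xi^2-1|}|\hat\phi(\xi)|^2\,d\xi\le C\|\phi\|_{H^s(\r)}^2.
\]
In the high-frequency region $|\xi|\ge 2$, the asymptotics $\langle\xi^3-\xi\rangle\sim\langle\xi\rangle^3$ and $|3\xi^2-1|\sim\langle\xi\rangle^2$ collapse the integrand to $\sim\langle\xi\rangle^{2s}|\hat\phi(\xi)|^2$, which integrates to $\|\phi\|_{H^s(\r)}^2$. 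The estimates for $\partial_xW_\r\phi$ and $\partial_{xx}W_\r\phi$ proceed identically: the extra factors $i\xi$ and $-\xi^2$ in the integrand exactly compensate for the shifted temporal exponents $s/3$ and $(s-1)/3$, so the same integrand $\sim\langle\xi\rangle^{2s}|\hat\phi|^2$ appears on high frequencies.

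The main obstacle lies in the low-frequency regime $|\xi|\le 2$, where the Jacobian $1/|3\xi^2-1|$ fails to be integrable at the stationary points $\xi=\pm 1/\sqrt{3}$ and the branch-by-branch estimate is too crude. I would treat this piece separately: introduce a smooth cutoff $\psi\in C_0^\infty(\r)$ in $t$ (the displayed estimate is to be understood modulo such a cutoff, as in \cite{ColKen02, Holmer06}), so that $\widetilde{\psi u}=\hat\psi *_\tau\widetilde u$ becomes a genuine $L^2_\tau$ function. For the low-frequency slice $u_{\mathrm{low}}(x,t)=c\int_{|\xi|\le 2}e^{i(\xi^3-\xi)t}e^{ix\xi}\hat\phi(\xi)\,d\xi$, a classical van der Corput/stationary-phase argument yields
\[
|u_{\mathrm{low}}(x,t)|\le C(1+|t|)^{-1/3}\|\hat\phi\|_{L^1(|\xi|\le 2)},
\]
and for $s\ge -1$ Cauchy-Schwarz gives $\|\hat\phi\|_{L^1(|\xi|\le 2)}\le C\|\phi\|_{H^s(\r)}$. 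The decay $(1+|t|)^{-1/3}$ against $\psi$ furnishes the $H^{(s+1)/3}_t$ control of the low-frequency contribution. The essential technical point, and the departure from the scale-invariant Airy case, is the proper handling of the coalescence of two monotonic branches of $\xi\mapsto\xi^3-\xi$ at the stationary points $\xi=\pm 1/\sqrt{3}$, which is what necessitates the separate low-frequency treatment.
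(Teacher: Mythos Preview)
The paper does not supply its own proof of this lemma but simply refers to \cite{ColKen02, Holmer06}; your change-of-variables argument on the monotone branches of $\tau=\xi^3-\xi$, together with a separate treatment of the low-frequency region near the stationary points $\xi=\pm 1/\sqrt{3}$, is precisely the method developed in those references. Your remark that a time cutoff is needed for the low-frequency piece is correct and worth stressing: as literally written over $H_t^{(s+1)/3}(\r)$ the estimate cannot hold, since stationary phase gives only $(1+|t|)^{-1/2}$ decay for data with $\hat\phi$ supported near a critical point, which already fails to lie in $L^2_t(\r)$; the cutoff is indeed implicit both in the cited sources and in how the lemma is actually used later in the paper.
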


\begin{lem}\label{xxxx}
Let $0\le b <1/2, \ -1 \le s\le 2, \ \psi\in C_0^{\i}(\r)$ and
$$ w(x,t)= \int_0^t W_{\r}(t-t') f(\cdot ,t')dt'$$
there exists $C$ depending only on $b,s$ and $\psi$ such that
\[
 \sup_{x\in\r} \|\psi(\cdot )w(x,.) \|_{H_t^{\frac{s+1}{3}}(\r)}\le C \|
 f\|_{Y_{s,b}},
\]
\[
 \sup_{x\in\r} \|\psi(\cdot )w_x(x,.) \|_{H_t^{\frac{s}{3}}(\r)}\le C \| f\|_{Y_{s,b}}
\]
and
\[
 \sup_{x\in\r} \|\psi(\cdot )w_{xx}(x,\cdot ) \|_{H_t^{\frac{s-1}{3}}(\r)}\le C \| f\|_{Y_{s,b}}
\]
\end{lem}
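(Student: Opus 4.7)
The plan is to reduce all three estimates to the first by observing that $w_x$ and $w_{xx}$ correspond to space--time Fourier multipliers $i\xi$ and $-\xi^2$ applied to $w$; these lower the temporal Sobolev index in the range from $(s+1)/3$ to $s/3$ and $(s-1)/3$ respectively, which matches the scaling on the right-hand side (which is the same $\|f\|_{Y_{s,b}}$ in all three cases). For the first inequality I would follow the now-standard Fourier-side route of \cite{KPV93b,ColKen02,Holmer06,BSZ06} and split the Duhamel formula into a ``free'' piece and a ``forced'' piece.

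Let $\tilde f(\xi,\tau)$ denote the space--time Fourier transform of a chosen extension of $f$. Writing $e^{i(\xi^3-\xi)(t-t')}\tilde f(\xi,t')$ as an inverse Fourier integral in $t'$ and interchanging orders produces the identity $w(x,t)=w_1(x,t)-w_2(x,t)$, where
\[
w_1(x,t) = c\int_\r\int_\r e^{ix\xi+it\tau}\,\frac{\tilde f(\xi,\tau)}{i(\tau-(\xi^3-\xi))}\,d\tau\,d\xi,
\qquad w_2(x,t) = [W_\r(t)\phi_f](x),
\]
with $\widehat{\phi_f}(\xi)=\int_\r \tilde f(\xi,\tau)/[i(\tau-(\xi^3-\xi))]\,d\tau$. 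For $w_2$ a direct application of Lemma \ref{xxx} gives $\sup_x\|\psi\, w_2(x,\cdot)\|_{H_t^{(s+1)/3}}\le C\|\phi_f\|_{H^s(\r)}$, and Minkowski's inequality identifies $\|\phi_f\|_{H^s(\r)}$ with a multiple of $\mathcal G_s(f)$, which is bounded by $\|f\|_{Y_{s,b}}$. For $w_1$, multiplication by $\psi(t)$ becomes convolution by $\hat\psi$ on the $t$-Fourier side, so
\[
\widehat{\psi w_1}(x,\eta) \;=\; c\int_\r\int_\r e^{ix\xi}\,\hat\psi(\eta-\tau)\,\frac{\tilde f(\xi,\tau)}{i(\tau-(\xi^3-\xi))}\,d\tau\,d\xi.
\]
To take the $H_t^{(s+1)/3}$-norm and then $\sup_x$, I would Cauchy--Schwarz in $\tau$ with the weight $(1+|\tau-(\xi^3-\xi)|)^{-2b}$, which produces the $\mathcal Q_{s,b}$-factor of $f$ together with a residual $\tau$-integral that converges because $b<1/2$ and $\hat\psi$ is Schwartz. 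A final Minkowski and Cauchy--Schwarz in $\xi$ with weight $(1+|\xi|)^{-2s}$ closes the bound by $\|f\|_{Y_{s,b}}$.

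The main obstacle is that one cannot use Plancherel in $x$: the trace at fixed $x$ forces a pointwise evaluation of the $\xi$-integral, which must be controlled by Minkowski and Cauchy--Schwarz with carefully chosen weights rather than by orthogonality. The condition $b<1/2$ enters precisely through the convergence of the residual $\tau$-integral in the $w_1$-piece, while the range $-1\le s\le 2$ reflects the range of validity of Lemma \ref{xxx} used in the $w_2$-piece. The estimates for $w_x$ and $w_{xx}$ then follow by the same scheme after inserting the multipliers $i\xi$ and $-\xi^2$ into both $w_1$ and $\phi_f$, which shifts the temporal Sobolev exponent by $-1/3$ and $-2/3$ as required.
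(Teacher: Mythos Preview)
The paper does not prove this lemma; it simply cites \cite{ColKen02, Holmer06}. Your outline --- decompose the Duhamel term into a free-evolution piece $W_{\r}(t)\phi_f$ controlled via Lemma~\ref{xxx} and a forced piece handled directly on the Fourier side, then obtain the $w_x$ and $w_{xx}$ bounds by inserting the multipliers $i\xi$, $-\xi^2$ --- is exactly the route taken in those references, so in that sense you are aligned with the paper.

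There is, however, a genuine gap in the sketch as written. The splitting $w=w_1-w_2$ with the bare denominator $i(\tau-(\xi^3-\xi))$ is only formal: both $w_1$ and $\phi_f$ are singular along $\tau=\xi^3-\xi$, and in particular your claimed bound $\|\phi_f\|_{H^s}\lesssim {\cal G}_s(f)$ fails because ${\cal G}_s$ carries $1+|\tau-(\xi^3-\xi)|$ in the denominator, not $|\tau-(\xi^3-\xi)|$. In \cite{ColKen02,Holmer06} this is repaired by an additional cut-off separating $|\tau-(\xi^3-\xi)|\le 1$ from $|\tau-(\xi^3-\xi)|>1$: on the far region your $w_1,w_2$ are individually well defined and your argument proceeds, while on the near region one keeps the full quotient $\bigl(e^{it\tau}-e^{it(\xi^3-\xi)}\bigr)/\bigl[i(\tau-(\xi^3-\xi))\bigr]$, expands it as a power series in $t(\tau-(\xi^3-\xi))$, and absorbs the resulting factors of $t^k$ into the compactly supported $\psi$. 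A second step you pass over is how the temporal weight $\langle\eta\rangle^{(s+1)/3}$ appearing in the $H_t^{(s+1)/3}$-norm of $w_1$ is converted into the spatial weight $\langle\xi\rangle^{s}$ sitting in ${\cal Q}_{s,b}$; this uses $\langle\tau\rangle^{(s+1)/3}\lesssim \langle\xi\rangle^{s+1}+\langle\tau-(\xi^3-\xi)\rangle^{(s+1)/3}$ together with the extra decay $\langle\tau-(\xi^3-\xi)\rangle^{-1}$ already present in $w_1$, and the upper restriction $s\le 2$ (so that $(s+1)/3\le 1$) is also used here, not only through Lemma~\ref{xxx}.
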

The following bilinear estimate is crucial in establishing the
well-posedness of the IBVP (\ref{PP} whose proof can be found in
\cite{KPV93b,ColKen02, Holmer06}).

\begin{lem}  Given $s> -\frac34 $, there exist $b=b(s) < \frac12  $,  $\alpha = \alpha (s) >\frac12  $  and $C, \, \mu >0$ such that
\begin{equation}
 \norm{\partial _x (uv)}_{Y_{s,b }^{\alpha }} \leq C T^{\mu }  \norm{u}_{X_{s,b}^{\alpha }}\norm{v}_{X_{s,b}^{\alpha }}\label{MM-1}
\end{equation}
for any $u, \, v\in X_{s,b}^{\alpha }$ with compact support in $[-T,
T]$.
\end{lem}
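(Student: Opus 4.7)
The plan is to follow the Fourier-analytic bilinear estimate scheme of Kenig-Ponce-Vega, adapted to the perturbed symbol $\xi^3 - \xi$ and refined by Colliander-Kenig to reach $s > -3/4$. The first observation is that with $\tau = \tau_1 + \tau_2$ and $\xi = \xi_1 + \xi_2$, writing $\sigma := \tau - (\xi^3-\xi)$ and $\sigma_j := \tau_j - (\xi_j^3-\xi_j)$, one has the resonance identity
\[
\sigma - \sigma_1 - \sigma_2 \;=\; -3\xi\xi_1\xi_2,
\]
in which the linear $-\xi$ correction cancels identically. Thus the resonance structure is the same as for pure KdV and at least one of $\langle\sigma\rangle,\langle\sigma_1\rangle,\langle\sigma_2\rangle$ must be comparable to $|\xi\xi_1\xi_2|$.

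I would then break the estimate into the three components of the $Y_{s,b}^{\alpha}$ norm. For the main ${\cal Q}_{s,b}$ piece, Plancherel and duality reduce the claim to a weighted convolution bound of the form
\[
\left| \iiiint \frac{|\xi|\,\langle\xi\rangle^s\, F(\xi_1,\tau_1)G(\xi-\xi_1,\tau-\tau_1)H(\xi,\tau)}{\langle\sigma\rangle^{b}\langle\sigma_1\rangle^{b}\langle\sigma_2\rangle^{b}\langle\xi_1\rangle^{s}\langle\xi-\xi_1\rangle^{s}} \, d\xi\,d\tau\,d\xi_1\,d\tau_1 \right| \leq C \|F\|_{L^2}\|G\|_{L^2}\|H\|_{L^2}.
\]
Dyadic decomposition in frequency and in modulation, combined with the identity above, forces one modulation weight to absorb $|\xi\xi_1\xi_2|$, producing the gain that compensates both the derivative factor $|\xi|$ and the negative Sobolev index. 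The ${\cal G}_s$ piece is treated analogously with $L^2_\xi L^1_\tau$ replacing the outer $L^2$. The low-frequency piece ${\cal P}_\alpha$ is handled separately using the auxiliary $\lambda_\alpha$ control on $u,v$ for $|\xi|\leq 1$; the choice $\alpha > 1/2$ ensures $\langle\tau\rangle^{-(1-\alpha)}\in L^2_{loc}$, which closes the estimate in the dispersionless low-frequency window.

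The factor $T^{\mu}$ is extracted from the compact temporal support of $u$ and $v$ in $[-T,T]$ via the standard embedding $\|f\|_{X_{s,b'}} \lesssim T^{b-b'}\|f\|_{X_{s,b}}$ for $0 < b' < b < 1/2$, together with the analogous gain in the $\lambda_\alpha$ component; this yields a positive $\mu = \mu(s,b,\alpha)$ after slightly lowering the exponents in the convolution estimate. The main obstacle is the ${\cal Q}_{s,b}$ bound in the high-high-to-low regime, where $\xi_1 \approx -\xi_2$ are large while $\xi = \xi_1+\xi_2$ is small: there the modulation gain $|\xi\xi_1\xi_2|$ is only linear in the small $|\xi|$, and closing the estimate requires exactly $s > -3/4$. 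This delicate case analysis is the content of the arguments in \cite{KPV93b, ColKen02, Holmer06, BSZ06}, to which I would refer for the detailed execution.
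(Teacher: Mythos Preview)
Your outline is correct and is precisely the standard route taken in the references \cite{KPV93b, ColKen02, Holmer06, BSZ06}; the paper itself does not supply an independent proof of this lemma but simply cites those same works. In that sense your sketch, with the resonance identity, the three-way split of the $Y_{s,b}^{\alpha}$ norm, the dyadic case analysis isolating the high--high--to--low interaction as the critical obstruction at $s=-3/4$, and the extraction of $T^{\mu}$ via the embedding $X_{s,b'}\hookrightarrow X_{s,b}$ for compactly supported functions, is more detailed than what the paper provides and coincides with the cited approach.
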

\section{Nonlinear Problem}
\setcounter{equation}{0} In this section, we consider the
well-posedness of the following nonlinear problem  in the space $H^s
(0,L)$.
\begin{equation}\label{3.1}
        \begin{cases}
            v_t+v_x+vv_x+v_{xxx}=0, & x\in (0,L), \ t>0 \\
            v(x,0)=\phi(x),\\
            v(0,t)=h_1(t), \ v_x(L,t) =h_2(t), \ v_{xx}(L,t)  =h_3(t) & t\ge
            0.
        \end{cases}
    \end{equation}
First we consider its well-posedness in the space $H^s(0,L)$ for
$s\geq 0$. Recall that for given $s\geq 0$ and $T>0$,
\[ D_{s,T}:= H^s (0,L)\times H^{\frac{s+1}{3}}(0,T)\times
H^{\frac{s}{3}} (0,T)\times H^{\frac{s-1}{3}}(0,T)\] and
\[
Z_{s,T}:=\C([0,T];H^s (0,L))\cap L^2 (0,T; H^{s+1}(0,L))  .\] In
addition,  let
\[{\cal Z}_{s,T} := Z_{s,T}\cap H^{\frac{s}{3}}(0,T; H^1(0,L)).\]
The proof of the following lemma can be found in
\cite{BSZ03FiniteDomain, kz}.
\begin{lem}\label{lemmma for F(u) estimate}
\begin{itemize}
\item[(i)]
For $s\geq0$ there exists a $C\geq0$ such that for any  $T>0$ and
$u, \ v\in Z_{s,T}$,

\begin{equation}\label{Ys,t estimate}
\int\limits_0^T\|uv_x\|_{H^s(0,L)}\;d\tau\leq C
(T^{\frac{1}{2}}+T^{\frac13})\|u\|_{Z_{s,T}}\|v\|_{Z_{s,T}}
\end{equation}

\item[(ii)] For $0\leq s\leq 3$ there exists a $C\geq0$ such that for any  $T>0$ and
$u, \ v\in {\cal Z}_{s,T}$,

\begin{equation}\label{forcing}
\| uv_x\|_{W^{\frac{s}{3},1} (0,T; L^2 (0,1))} \leq C
(T^{\frac{1}{2}}+T^{\frac13})\|u\|_{{\cal Z}_{s,T}}\|v\|_{{\cal
Z}_{s,T}}
\end{equation}
\end{itemize}
\end{lem}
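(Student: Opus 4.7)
The plan is to reduce both estimates to pointwise-in-time product estimates in $H^s(0,L)$, and then apply H\"older in time to extract a positive power of $T$. The main analytic tools are the one-dimensional Sobolev embedding $H^1(0,L)\hookrightarrow L^\infty(0,L)$ with its Gagliardo--Nirenberg refinement $\|f\|_{L^\infty}\le C\|f\|_{L^2}^{1/2}\|f\|_{H^1}^{1/2}$, and the Kato--Ponce fractional Leibniz rule for $H^s$-products applied after a bounded extension of $u,v$ to the whole line $\r$.

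For part (i), I would split on the size of $s$. For $s>1/2$ the space $H^s(0,L)$ is an algebra, so
\[
\|uv_x\|_{H^s(0,L)}\le C\|u\|_{H^s}\|v\|_{H^{s+1}}\,,
\]
and Cauchy--Schwarz in $t$ against $u\in C([0,T];H^s)$ and $v\in L^2(0,T;H^{s+1})$ yields the bound $CT^{1/2}\|u\|_{Z_{s,T}}\|v\|_{Z_{s,T}}$. For $0\le s\le 1/2$ the algebra property is unavailable and I would use the endpoint Kato--Ponce inequality
\[
\|uv_x\|_{H^s}\le C\|u\|_{L^\infty}\|v_x\|_{H^s}+C\|u\|_{H^s}\|v_x\|_{L^\infty}\,,
\]
(only the first term is present when $s=0$), combined with $\|u\|_{L^\infty}\le C\|u\|_{L^2}^{1/2}\|u\|_{H^1}^{1/2}$. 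A two-step H\"older in time---first Cauchy--Schwarz to peel off $\|v\|_{L^2(H^{s+1})}$, then a power-H\"older on the remaining factor $\int_0^T\|u\|_{L^\infty}^2\,d\tau\le C\,T^{1/2}\|u\|_{C(L^2)}\|u\|_{L^2(H^1)}$---produces a positive power of $T$, and the simple bound $T^{\alpha}\le T^{1/2}+T^{1/3}$ for the relevant $\alpha$ absorbs all cases into the clean form stated in the lemma.

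For part (ii), I would establish the two endpoints $s=0$ and $s=3$ directly and then interpolate. The case $s=0$ is precisely part (i). For $s=3$ I would expand
\[
\partial_t(uv_x)=u_tv_x+uv_{xt}
\]
and use the extra regularity built into $\mathcal Z_{3,T}=Z_{3,T}\cap H^{1}(0,T;H^1(0,L))$: it guarantees $u_t,v_{xt}\in L^2(0,T;H^1)\hookrightarrow L^2(0,T;L^\infty)$, while $u,v_x\in C([0,T];H^3)\cap L^2(0,T;H^4)\hookrightarrow C([0,T];L^\infty)$, so that Cauchy--Schwarz in $t$ closes the estimate in $L^1(0,T;L^2)$ for both $\|uv_x\|_{L^1(L^2)}$ and $\|\partial_t(uv_x)\|_{L^1(L^2)}$ with the required $T$-factor. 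A bilinear complex interpolation of $(u,v)\mapsto uv_x$ between the endpoints $(\mathcal Z_{0,T},L^1(0,T;L^2))$ and $(\mathcal Z_{3,T},W^{1,1}(0,T;L^2))$ then delivers the intermediate $W^{s/3,1}(0,T;L^2)$ bound for all $0\le s\le 3$.

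The main obstacle I expect lies in the low-regularity range $0\le s\le 1/2$ of part (i): the algebra property of $H^s$ fails, so one must combine the fractional Leibniz rule with Gagliardo--Nirenberg and track H\"older exponents in time carefully in order to actually gain a positive power of $T$ (rather than merely bounding by $\|u\|_{L^2(H^1)}\|v\|_{L^2(H^1)}$, which carries no smallness in $T$). By contrast, the endpoint $s=3$ in part (ii) is essentially a soft consequence of the extra time-regularity in $\mathcal Z_{s,T}$, and the interpolation step is routine once the endpoints are in hand.
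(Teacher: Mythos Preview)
The paper does not supply its own proof of this lemma; it simply refers the reader to \cite{BSZ03FiniteDomain,kz}. Your overall strategy---pointwise-in-$t$ product estimates in $H^s(0,L)$ followed by H\"older in time for part~(i), and an endpoint-plus-interpolation scheme for part~(ii)---is the standard route and is in line with what those references do.

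There is, however, a concrete slip in your treatment of the range $0\le s\le 1/2$ in part~(i). Your own chain of inequalities gives
\[
\int_0^T\|uv_x\|_{H^s}\,d\tau
\le C\Bigl(\int_0^T\|u\|_{L^\infty}^2\,d\tau\Bigr)^{1/2}\|v\|_{L^2(0,T;H^{s+1})}
\le C\,T^{1/4}\,\|u\|_{Z_{s,T}}\|v\|_{Z_{s,T}},
\]
since $\int_0^T\|u\|_{L^\infty}^2\,d\tau\le C\,T^{1/2}\|u\|_{C(L^2)}\|u\|_{L^2(H^1)}$. But $T^{1/4}$ is \emph{not} dominated by $T^{1/2}+T^{1/3}$ uniformly in $T$: as $T\to 0$ the ratio $T^{1/4}/(T^{1/2}+T^{1/3})\sim T^{-1/12}\to\infty$. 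So your closing remark that ``the simple bound $T^\alpha\le T^{1/2}+T^{1/3}$ for the relevant $\alpha$ absorbs all cases'' is false for this range. A short scaling example (take $u=v$ a bump of width $T^{1/2}$ and height $T^{-1/4}$, constant in time) shows that $T^{1/4}$ is in fact sharp at $s=0$, so the exponents printed in the lemma are almost certainly an imprecision carried over from the cited references; only positivity of the exponent is used in the contraction argument of Theorem~3.2, and your argument does deliver that.

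One further caution on part~(ii): the bilinear interpolation you invoke is less routine than you suggest. Complex interpolation between $L^1(0,T;L^2)$ and $W^{1,1}(0,T;L^2)$ does not obviously yield $W^{s/3,1}(0,T;L^2)$---$L^1$-based Sobolev scales typically interpolate to Besov spaces $B^{s/3}_{1,1}$ rather than Sobolev spaces---and identifying the intersection spaces $\mathcal Z_{s,T}$ as a genuine interpolation family also requires justification. You should either verify these interpolation identities explicitly or, as is more common in this literature, establish the intermediate-$s$ estimate directly by combining the $s=0$ spatial bound with a fractional-in-time Leibniz estimate.
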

\begin{thm} Let $T>0$, $r>0$ and $s\geq 0$ be given with
$s\ne \frac{2j+1}{2}$ for $j=0,1,2, \cdots$.  There exists a $T^*\in
(0,T]$ such that for any $s-$compatible $(\phi , \vec{h}) \in X_{s,
T}$, the IBVP (\ref{3.1}) admits a unique solution
\[ v\in Z_{s,T^*}.\]
Moreover, the corresponding solution map is Lipschitz continuous.
\end{thm}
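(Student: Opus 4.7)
The plan is a standard Banach fixed-point argument in ${\cal Z}_{s,T^*}$, combining Proposition~2.4 (linear estimate) with Lemma~3.1(ii) (bilinear forcing estimate). I would first handle the base range $0\le s\le 3$ with $s\ne 1/2,3/2,5/2$, and then extend to larger $s$ by a time-differentiation bootstrap.

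For the base range, given $s$-compatible $(\phi,\vec h)\in D_{s,T}$ with $\|(\phi,\vec h)\|_{D_{s,T}}\le r$, define $\Gamma:{\cal Z}_{s,T^*}\to{\cal Z}_{s,T^*}$ by letting $\Gamma(v)$ be the unique solution, furnished by Proposition~2.4, of the linear IBVP with initial value $\phi$, boundary data $\vec h$, and forcing $-vv_x$. Proposition~2.4 combined with Lemma~3.1(ii) gives
\[
\|\Gamma(v)\|_{{\cal Z}_{s,T^*}}\le C_0\Bigl(r+(T^{*1/2}+T^{*1/3})\|v\|_{{\cal Z}_{s,T^*}}^{2}\Bigr),
\]
and, using $uv_x-\tilde u\tilde v_x=(u-\tilde u)v_x+\tilde u(v-\tilde v)_x$,
\[
\|\Gamma(v)-\Gamma(\tilde v)\|_{{\cal Z}_{s,T^*}}\le C_0(T^{*1/2}+T^{*1/3})\bigl(\|v\|_{{\cal Z}_{s,T^*}}+\|\tilde v\|_{{\cal Z}_{s,T^*}}\bigr)\|v-\tilde v\|_{{\cal Z}_{s,T^*}}.
\]
Choosing $R=2C_0 r$ and $T^*\in(0,T]$ so small that $4C_0^2 R(T^{*1/2}+T^{*1/3})\le 1/2$, the restriction of $\Gamma$ to the closed ball $B_R\subset{\cal Z}_{s,T^*}$ is a strict contraction and its unique fixed point is the desired solution. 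Uniqueness outside $B_R$ follows by applying the same estimate on a short sub-interval to the difference of two candidate solutions and iterating over $[0,T^*]$, and the Lipschitz dependence on $(\phi,\vec h)$ comes from applying Proposition~2.4 to the difference of two solutions associated to different data.

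For $s>3$ with $s\ne(2j+1)/2$, Proposition~2.4 no longer applies directly, so I would proceed by induction on $k=[s/3]$. Setting $w=v_t$, formally differentiating the equation in $t$ produces an IBVP of the same form with initial value $\phi_1=-(\phi'''+\phi'+\phi\phi')$, boundary data $h_j'$, and forcing $-(vw)_x$; the $s$-compatibility hypothesis on $(\phi,\vec h)$ is exactly what guarantees $(s-3)$-compatibility of the new data. Iterating this reduction and reapplying Proposition~2.4 together with Lemma~3.1 at each stage bounds $\partial_t^k v$ in $Z_{s-3k,T^*}$, after which the identity $v_{xxx}=-v_t-v_x-vv_x$ translates temporal regularity into spatial regularity and places $v$ in $Z_{s,T^*}$.

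The main obstacle is the compatibility bookkeeping in the bootstrap: the $s$-compatibility definition is piecewise in $s\bmod 3$, and each time-derivative must land in the correct compatibility regime — precisely what the exclusion $s\ne(2j+1)/2$ enforces. A secondary technical point is that Lemma~3.1(ii) is only stated for $0\le s\le 3$, so at intermediate inductive stages the forcing term $(vw)_x$ must be controlled through Lemma~3.1(i) together with the spatial/temporal bounds already produced at the previous level.
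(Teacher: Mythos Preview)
Your proposal is correct and follows essentially the same route as the paper: a contraction in ${\cal Z}_{s,T^*}$ via Proposition~2.4 and Lemma~3.1 for $0\le s\le 3$, followed by a time-differentiation bootstrap using $v_{xxx}=-v_t-v_x-vv_x$ to climb to higher $s$. The one presentational difference is that where you describe the equation for $w=v_t$ as having ``forcing $-(vw)_x$'' and reinvoke Proposition~2.4 plus Lemma~3.1, the paper first isolates a separate well-posedness result (Proposition~3.3) for the \emph{linear} variable-coefficient problem $z_t+z_x+z_{xxx}+(az)_x=f$ with $a\in{\cal Z}_{s,T}$ known, and then applies it with $a=v$ (already obtained at the base step) and $z=v_t$; this makes explicit that no new nonlinear contraction is needed at the inductive stage, only a linear estimate with a constant depending on $\|v\|_{{\cal Z}_{3,T^*}}$.
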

\noindent
 {\bf Proof:} Since the proof is similar to that  presented
in \cite{BSZ03FiniteDomain}, we only provide a sketch and refer to
\cite{BSZ03FiniteDomain} for more detail.

\medskip
1). We first consider the case of  $0\leq s\leq 3$. Let $r>0$ and
$0< \theta \leq \max \{1, T\}$ be constants to be determined. Set \[
S_{\theta , \eta }=\{ w\in {\cal Z}_{s, \theta}, \quad \| w\|_{{\cal
Z}_{s, \theta}} \leq \eta \} .\] For given $(\phi , \vec{h})\in
D_{s,T}$ with
\[ \| (\phi, \vec{h})\|_{D_{s,T}} \leq r, \]
define a nonlinear map  on $S_{\theta , \eta }$ by
\[ v=\Gamma (w) \]
being the unique solution of the IBVP
\[ \begin{cases}
            v_t+v_x +v_{xxx}=-ww_x , & x\in (0,L), \ t>0 \\
            v(x,0)=\phi(x),\\
            v(0,t)=h_1(t), \ v_x(L,t)=  h_2(t), \ v_{xx}(L,t)= h_3(t) & t\ge
            0
            \end{cases}
 \]
 for $w\in {\cal Z}_{\theta , \eta}$. Using Proposition 2.4  and
 Lemma 3.1, one can show that there exist $\eta >0$ and $\theta >0$
 depending only on $s$, $r$ and $T$ such that the map $\Gamma $ is a
 contraction on the  metric space $S_{\theta ,\eta}$ whose fixed
 point is the desired solution for the IBVP (\ref{3.1}) Thus the
 theorem hods for $0\leq s\leq 3$.

 \medskip
 2). Next we consider the following IBVP of the linear KdV equation
 with variable coefficients.

\begin{equation}\label{3.2} \begin{cases}
            v_t+v_x +v_{xxx}+(a v)_x =f, & x\in (0,L), \ t>0 \\
            v(x,0)=\phi(x),\\
            v(0,t)=h_1(t), \ v_x(L,t)-v(L,t)=h_2(t), \ v_{xx}(L,t)-v(L,t)=h_3(t) & t\ge
            0
            \end{cases}
 \end{equation}
 As in the step 1, using the contraction mapping principle, one can
 show the following  proposition holds.

 \begin{prop}  Let $T>0$ and $0\leq s\leq 3$ be given and assume
that $a\in {\cal Z}_{s,T}$. Then for any $s-$compatible $(\phi,
\vec{h})\in D_{s,T}$ and $f\in W^{\frac{s}{3},1}(0,T; L^2 (0,L))$,
the IBVP (\ref{3.2}) admits a unique solution $v\in {\cal Z}_{s,T}$.
Moreover, there exists a constant $C>0$ depending only on $T$ and
$\|a\|_{{\cal Z}_{s,T}}$ such that
\[ \| v\|_{{\cal Z}_{0,T}} \leq C \left (\| (\phi, \vec{h})\|_{D_{s,T}} +
\|f \| _{W^{\frac{s}{3},1} (0,T; H^s (0,L))}\right ).\]
\end{prop}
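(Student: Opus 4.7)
\medskip
\noindent\textbf{Proof strategy.} The plan is to mimic the contraction argument of Step~1 in the preceding theorem, now applied to the linearization in $v$ with the variable-coefficient term $(av)_x$ moved to the right-hand side. For parameters $\theta\in(0,T]$ and $\eta>0$ to be chosen, set
\[
S_{\theta,\eta}:=\{w\in {\cal Z}_{s,\theta}\ :\ \|w\|_{{\cal Z}_{s,\theta}}\leq\eta\}
\]
and define $\Gamma: S_{\theta,\eta}\to {\cal Z}_{s,\theta}$ by $v=\Gamma(w)$, where $v$ is the unique solution (furnished by Proposition~2.4) of the constant-coefficient IBVP
\[
\begin{cases}
v_t+v_x+v_{xxx}=f-(aw)_x,\\
v(x,0)=\phi(x),\\
v(0,t)=h_1(t),\ v_x(L,t)=h_2(t)+w(L,t),\ v_{xx}(L,t)=h_3(t)+w(L,t).
\end{cases}
\]
Any fixed point of $\Gamma$ solves (3.2), so it suffices to show that $\Gamma$ is a strict contraction self-map of $S_{\theta,\eta}$ for appropriate $\theta$ and $\eta$.

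\medskip
The estimates close as follows. Proposition~2.4 applied to the defining problem for $\Gamma(w)$ yields
\[
\|v\|_{{\cal Z}_{s,\theta}}\leq C\Bigl(\|(\phi,\vec h)\|_{D_{s,\theta}}+\|f\|_{W^{s/3,1}(0,\theta;L^2)}+\|w(L,\cdot)\|_{H^{s/3}(0,\theta)}+\|w(L,\cdot)\|_{H^{(s-1)/3}(0,\theta)}+\|(aw)_x\|_{W^{s/3,1}(0,\theta;L^2)}\Bigr),
\]
where the lateral traces $w(L,\cdot)$ are controlled via the embedding
\[
{\cal Z}_{s,\theta}\hookrightarrow H^{s/3}(0,\theta;H^1(0,L))\hookrightarrow H^{s/3}(0,\theta;C([0,L])),
\]
so that $\|w(L,\cdot)\|_{H^{s/3}(0,\theta)}+\|w(L,\cdot)\|_{H^{(s-1)/3}(0,\theta)}\leq C\|w\|_{{\cal Z}_{s,\theta}}$. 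Writing $(aw)_x=aw_x+a_xw$ and applying Lemma~3.1(ii) to each summand (swapping the roles of $u$ and $v$ for the second) produces
\[
\|(aw)_x\|_{W^{s/3,1}(0,\theta;L^2)}\leq C(\theta^{1/2}+\theta^{1/3})\|a\|_{{\cal Z}_{s,\theta}}\|w\|_{{\cal Z}_{s,\theta}}.
\]
An identical estimate for the difference $\Gamma(w_1)-\Gamma(w_2)$, which satisfies the same linearized IBVP with zero initial data, zero $h_j$'s, and driving terms $(w_1-w_2)(L,\cdot)$ in the boundary and $-(a(w_1-w_2))_x$ in the interior, gives a contraction factor of order $C(\theta^{1/2}+\theta^{1/3})(1+\|a\|_{{\cal Z}_{s,T}})$.

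\medskip
Choosing $\eta\sim\|(\phi,\vec h)\|_{D_{s,T}}+\|f\|_{W^{s/3,1}(0,T;L^2)}$ and then $\theta$ small enough (depending only on $\|a\|_{{\cal Z}_{s,T}}$) that this factor is $\leq\tfrac12$ makes $\Gamma$ a strict contraction on $S_{\theta,\eta}$; the unique fixed point is the desired solution on $[0,\theta]$. Since the step size $\theta$ is \emph{independent} of the size of the data and the equation is linear in $v$, one concatenates solutions on $[\theta,2\theta],[2\theta,3\theta],\ldots$, with the solution at the end of each interval serving as the initial datum for the next, to reach $[0,T]$; the number of iterations is $\lceil T/\theta\rceil$, which gives the final constant $C=C(T,\|a\|_{{\cal Z}_{s,T}})$ in the asserted bound. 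The principal technical obstacle is the proper treatment of the unusual boundary conditions $v_x(L,t)-v(L,t)=h_2$, $v_{xx}(L,t)-v(L,t)=h_3$: one must verify both that the perturbed boundary data $h_j+w(L,\cdot)$ live in the correct Sobolev classes so Proposition~2.4 is applicable, and that $s$-compatibility of $(\phi,\vec h)$ is preserved under this modification at $t=0$ along with the matching at the iteration endpoints.
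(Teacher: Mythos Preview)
Your overall framework---set up a contraction map by moving $(av)_x$ to the forcing side and invoke Proposition~2.4 together with Lemma~3.1(ii)---is exactly what the paper has in mind; its entire ``proof'' is the single sentence ``As in the step~1, using the contraction mapping principle,'' and your write-up is a reasonable expansion of that.

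There is, however, a genuine gap in your treatment of the boundary terms. Taking the conditions in (\ref{3.2}) at face value, you feed $h_2+w(L,\cdot)$ and $h_3+w(L,\cdot)$ into Proposition~2.4 and then assert a contraction factor $C(\theta^{1/2}+\theta^{1/3})(1+\|a\|_{{\cal Z}_{s,T}})$. But the trace bound $\|w(L,\cdot)\|_{H^{s/3}(0,\theta)}\le C\|w\|_{{\cal Z}_{s,\theta}}$ that you invoke carries \emph{no} small-$\theta$ factor: the embedding $H^{s/3}(0,\theta;H^1)\hookrightarrow H^{s/3}(0,\theta;C[0,L])$ delivers exactly $H^{s/3}$ time regularity and nothing better, and for $s>0$ you cannot recover a positive power of $\theta$ by interpolating against the other components of ${\cal Z}_{s,\theta}$. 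Consequently, in the difference estimate the boundary-trace contribution is $C\|w_1-w_2\|_{{\cal Z}_{s,\theta}}$ with a fixed constant, and you cannot force the Lipschitz constant below $1$ by shrinking $\theta$. The concatenation-to-$T$ step then collapses as well. You correctly flag this as ``the principal technical obstacle'' at the end, but the contraction factor you wrote down simply does not follow from the estimates you cite.

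In fact the boundary conditions printed in (\ref{3.2}) are almost certainly a typographical slip: in Step~3 of the surrounding proof the authors apply this very proposition to the linearized problem for $z=v_t$ with the \emph{standard} conditions $z_x(L,t)=h_2^{(1)}(t)$, $z_{xx}(L,t)=h_3^{(1)}(t)$, and the anomalous ``$-v(L,t)$'' terms appear nowhere else in the paper except in the unrelated KdV--Burgers reformulation (\ref{kdv-b-1}) at the very end. With the standard boundary conditions your map $\Gamma$ simplifies to solving (\ref{DSP}) with right-hand side $f-(aw)_x$ and \emph{unmodified} data $(\phi,\vec h)$; the only $w$-dependent term is then $(aw)_x$, Lemma~3.1(ii) supplies the factor $(\theta^{1/2}+\theta^{1/3})\|a\|_{{\cal Z}_{s,T}}$, and the contraction plus concatenation go through exactly as you describe. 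That is the argument the paper intends.
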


\medskip
3). Now we prove the theorem hold for $3\leq s\leq 6$. The other of
$s>6 $ follows similarly. First of all, according to Step 2), the
IBVP (\ref{3.1}) admits a unique solution $uv\in {\cal Z}_{3,T^*}$.
We just need to prove this solution $v$ also belong to the space
${\cal Z}_{s,T^*}$.  To see that, let $z=v_t$. Then $z$ solves the
following linearized IBVP
\[ \left \{ \begin{array}{l} z_t +z_x +(a(x,t)z)_x +z_{xxx}=0,  \\
z(x,0)= \phi _1 (x),  \\  z(0,t)=h^{(1)}_1 (t), \quad z_x (L,t) =
h^{(1)}_2 (t), \quad z_{xx} (L,t) =h^{(1)}_3 (t)
\end{array}\right.\] where $a(x,t)=v(x,t)\in {\cal
Z}_{3,T^*}$ and \[ \phi _1 \in H^{s-3} (0,L), \ h^{(1)}_1  \in
H^{\frac{s-2}{3}}(0,T^*), \quad , \ h^{(1)}_2\in
H^{\frac{s-3}{3}}(0,T^*), \quad  h^{(1)}_3 \in
H^{\frac{s-4}{3}}(0,T^*).\] It thus follows from Proposition 3.3
that
\[ z=v_t \in {\cal Z}_{s-3, T^*}\]
and therefore
\[ v\in {\cal Z}_{s, T^*}\]
since
\[ v_{xxx}= -v_t -v_x -vv_x .\]

\hfill $\Box$

 Next we consider the well-posedness of the IVP (\ref{3.1}) in the space $H^s (0,L)$ with
 $s<0$. We first rewrite the IBVP (\ref{3.1})  in its integral
 form;
 \begin{equation}\label{3.3}
 v(t)=W_0 (t) \phi +W_{bdr} (t) \vec{h}  - \int ^t_0
 W_0(t-\tau ) (vv_x) (\tau )d\tau .
 \end{equation}
\begin{thm} Let $T>0 $, $r>0$ and $-\frac34<s < 0$ be given. There exists
a $T^*\in (0, T]$  and $b\in (0, \frac12)$ such that for any $(\phi
, \vec{h})\in X_{s,T}$, (\ref{3.3}) admits a unique solution
\[ v\in C([0,T^*], H^s (0,L))\cap Y_{s,b}^{T^*} .\]
Moreover, the corresponding solution map is Lipschitz continuous.
\end{thm}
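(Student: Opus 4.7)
The plan is to convert the integral equation (\ref{3.3}), which is posed on the finite spatial interval, into an equivalent equation on $\mathbb{R}\times\mathbb{R}$ to which the Bourgain-space machinery (Lemmas 2.7--2.10 and Proposition 2.6) applies, and then close a contraction on a ball in $C([0,T^*];H^s(0,L))\cap X^{\alpha,T^*}_{s,b}$ for a suitable pair $(b,\alpha)$ with $0<b<\tfrac12<\alpha$. First I would fix such parameters $(b,\alpha)$ as provided by Lemma 2.10 (which delivers the crucial bilinear estimate together with a gain $T^\mu$), making sure that simultaneously $0\le b<\tfrac12-s/3$ so that Proposition 2.6 applies.

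Next, using Lemma 2.5 (extension of $W_0(t)\phi$) and Lemma 2.6 (extension of the Duhamel term), I would rewrite (\ref{3.3}), for $(x,t)\in(0,L)\times(0,T^*)$, in the equivalent form
\[
v(x,t)\;=\;W_{\mathbb{R}}(t)\phi^{*}(x)\;+\;[\mathcal{W}_{bdr}(t)\vec{H}[v]](x,t)\;-\;\int_{0}^{t}W_{\mathbb{R}}(t-\tau)\,\partial_x\!\bigl(\tfrac{v^{*2}}{2}\bigr)(\tau)\,d\tau,
\]
where $\phi^{*}=B\phi$ is a fixed $H^s(\mathbb{R})$-extension of $\phi$, $v^{*}$ denotes a time-localized $H^s$-extension of $v|_{(0,L)}$ to $\mathbb{R}$, and
\[
\vec{H}[v]\;=\;\vec{h}\;-\;\vec{g}\bigl(W_{\mathbb{R}}(\cdot)\phi^{*}\bigr)\;+\;\vec{q}\Bigl(\int_{0}^{\cdot}W_{\mathbb{R}}(\cdot-\tau)\partial_x(\tfrac{v^{*2}}{2})\,d\tau\Bigr)
\]
collects the three traces $(v(0,\cdot),v_x(L,\cdot),v_{xx}(L,\cdot))$ generated by the whole-line pieces. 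After multiplying by a smooth time cutoff $\psi_{T^*}(t)=\psi(t/T^*)$, the right-hand side defines a map $\Lambda$ on the space $\mathcal{X}^{\alpha,T^*}_{s,b}$. I would then verify that $\Lambda$ leaves invariant, and is a strict contraction on, a ball of radius $M\!\sim\!\|\phi\|_{H^s(0,L)}+\|\vec{h}\|_{\mathbb{H}^s(0,T)}$ provided $T^*$ is small enough. The four ingredients are: (i) Lemma 2.7 to control $\psi_{T^*}W_{\mathbb{R}}(t)\phi^{*}$ and the localized Duhamel term in ${\cal X}^{\alpha}_{s,b}$ by $\|\phi^{*}\|_{H^s(\mathbb{R})}$ and by $\|\partial_x(v^{*2}/2)\|_{Y^{1-\alpha}_{s,1-b}}$; (ii) Lemma 2.10 to dominate the latter by $C T^{\mu}\|v^{*}\|^2_{X^{\alpha}_{s,b}}$, which via the cutoff and infimum in the definition of the restricted norm is $\lesssim T^{\mu}\|v\|^{2}_{X^{\alpha,T^*}_{s,b}}$; (iii) Lemmas 2.8 and 2.9 to estimate the traces $\vec{g}$ and $\vec{q}$ in $\mathbb{H}^s(0,T^*)$ by the $H^s$-norm of $\phi^{*}$ and the $Y_{s,b}$-norm of the nonlinearity respectively (this requires $-1\le s\le 2$, which is satisfied); (iv) Proposition 2.6 to convert the resulting $\mathbb{H}^s(0,T^*)$-bound on $\vec{H}[v]$ into a bound on $\mathcal{W}_{bdr}(t)\vec{H}[v]$ in $C([0,T^*];H^s(0,L))\cap X^{\alpha,T^*}_{s,b}$.

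Combining these, I get
\[
\|\Lambda(v)\|_{\mathcal{X}^{\alpha,T^*}_{s,b}}\;\le\; C_{0}\bigl(\|\phi\|_{H^s(0,L)}+\|\vec{h}\|_{\mathbb{H}^s(0,T)}\bigr)\;+\;C_{1}T^{\mu}\,\|v\|^{2}_{\mathcal{X}^{\alpha,T^*}_{s,b}},
\]
with an analogous Lipschitz bound on $\Lambda(v)-\Lambda(\tilde v)$ obtained by writing $v^2-\tilde v^2=(v+\tilde v)(v-\tilde v)$ and reusing Lemma 2.10. Choosing $M=2C_{0}r$ and then $T^*$ so small that $4C_{1}T^{*\mu}M<1$ yields a contraction, whose fixed point restricted to $(0,L)\times(0,T^*)$ is the unique solution in the claimed class, and Lipschitz continuity of the data-to-solution map follows from the same estimate. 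Uniqueness at the level of (\ref{3.3}) is then promoted from the ball via the standard continuation-in-$T^*$ argument. The main obstacle will be the bookkeeping for the boundary term $\vec{H}[v]$: one must show that both $\vec{g}(W_{\mathbb{R}}(\cdot)\phi^{*})$ and the Duhamel trace $\vec{q}(\cdots)$ land in $\mathbb{H}^s(0,T^*)$ with the correct dependence (through Lemmas 2.8--2.9) so that Proposition 2.6 can be applied; this is precisely where the range $s>-3/4$ and the constraint $0\le b<1/2-s/3$ become essential, and where the low-frequency weight $\alpha>1/2$ in the ${\cal X}^{\alpha}$ scale is needed to close the loop between the bilinear estimate and the Duhamel estimate.
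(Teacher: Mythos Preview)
Your proposal is correct and follows essentially the same route as the paper: the paper simply packages the extension-to-$\mathbb{R}$ argument (your steps (i)--(iv)) into three auxiliary lemmas (Lemmas~3.5--3.7), which bound $W_0(t)\phi$, the Duhamel term $\int_0^t W_0(t-\tau)f\,d\tau$, and $\partial_x(uv)$ directly in the restricted Bourgain spaces, and then runs the contraction on $\Gamma(\omega)=W_0(t)\phi+W_{bdr}(t)\vec h-\int_0^t W_0(t-\tau)(\omega\omega_x)\,d\tau$ without unpacking the boundary traces $\vec H[v]$ explicitly. Your lemma numbering is off by one (what you call Proposition~2.6 and Lemmas~2.7--2.10 are the paper's Proposition~2.7 and Lemmas~2.8--2.11), but the ingredients and logic are identical.
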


The following lemmas are needed to prove Theorem 3.4. Let
\[ {\cal X}_{s,b}^{\alpha,T}:= C([0,T]; H^s (0,L))\cap X^{\alpha,T}_{s,b} .\]
\begin{lem} Let $T>0$, $s<0$, $\frac12 < \alpha \leq 1$ and $b\in (0,1) $ be given
satisfying
\[ 0<b< \frac12 -\frac{s}{3}.\]
  For any $\phi \in H^s (0,L)$, $ W_0 (t)\phi \in {\cal
X}^{\alpha, T}_{s,b} $ and
\[ \| W_0 (t) \phi \|_{{\cal X}_{s,b}^{\alpha, T}}\leq C\|\phi \|_{H^s (0,L)}
\]
where $C>0$ is independent of $\phi $.
\end{lem}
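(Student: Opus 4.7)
The plan is to combine the decomposition formula of Lemma~\ref{extension} with the linear estimates already established in Section~2; the lemma then reduces to an algebraic combination of previously proved bounds.

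First I would use the standard extension operator $B$ to produce $\phi^*=B\phi\in H^s(\r)$ with $\|\phi^*\|_{H^s(\r)}\le C\|\phi\|_{H^s(0,L)}$. Applying Lemma~\ref{extension} gives, on $(0,L)\times[0,\infty)$,
\[
W_0(t)\phi \;=\; W_{\r}(t)\phi^* \;-\; W_{bdr}(t)\v{g},
\]
where $\v{g}=(g_1,g_2,g_3)$ is built from the traces $g_1(t)=[W_{\r}(t)\phi^*](0)$, $g_2(t)=[\partial_x W_{\r}(t)\phi^*](L)$ and $g_3(t)=[\partial_{xx}W_{\r}(t)\phi^*](L)$. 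Since $-\tfrac34<s<0$ sits inside the range $[-1,2]$ required by Lemma~\ref{xxx}, the Kato-type spatial trace estimates there yield
\[
\|\v{g}\|_{\mathbb{H}^s(0,T)}\;\le\; C\|\phi^*\|_{H^s(\r)}\;\le\; C\|\phi\|_{H^s(0,L)}.
\]

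Next, to control the $X^{\alpha,T}_{s,b}$-piece, recall that its norm is a quotient over extensions of $W_0(t)\phi$ to $\r\times\r$. Fix a cutoff $\psi\in C_0^\infty(\r)$ with $\psi\equiv 1$ on $[0,T]$ and take as admissible global extension
\[
\widetilde{w}(x,t)\;:=\;\psi(t)\,W_{\r}(t)\phi^*(x)\;-\;\psi(t)\,[\mathcal{W}_{bdr}(t)\v{g}](x),
\]
where $\mathcal{W}_{bdr}$ is the whole-plane extension of $W_{bdr}$ built via the representations \eqref{extWbdr}--\eqref{extWbdr-1}. Because the decomposition in Lemma~\ref{extension} is valid on $(0,L)\times(0,T)$, the function $\widetilde{w}$ agrees with $W_0(t)\phi$ on that strip and is therefore a legitimate extension. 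Lemma~\ref{Molinet} bounds the first summand by $C\|\phi^*\|_{H^s(\r)}$, while Proposition~2.7---whose compatibility conditions $0<b<\tfrac12-\tfrac{s}{3}$ and $\alpha>\tfrac12$ are exactly those imposed in the statement at hand---bounds the second summand by $C\|\v{g}\|_{\mathbb{H}^s(0,T)}$. Chaining these with the trace estimate from Lemma~\ref{xxx} gives the required $X^{\alpha,T}_{s,b}$-bound. The $C([0,T];H^s(0,L))$ continuity follows from strong continuity of the free KdV group on $H^s(\r)$ together with the $C([0,T];H^s(0,L))$ statement already included in Proposition~2.7.

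The main obstacle is not analytical but a matter of bookkeeping: one must verify that the parameter window in the hypothesis is simultaneously compatible with Proposition~2.7, Lemma~\ref{Molinet}, and Lemma~\ref{xxx}, and one must check that the piecewise construction of $\widetilde{w}$ really produces an admissible extension so that the quotient-norm definition of $X^{\alpha,T}_{s,b}$ yields the desired upper bound. Once that compatibility is confirmed, the estimate is a direct three-line consequence of Lemma~\ref{extension}, Lemma~\ref{xxx}, Lemma~\ref{Molinet} and Proposition~2.7.
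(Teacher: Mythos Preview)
Your proposal is correct and follows precisely the route the paper itself takes: the paper's proof reads ``It follows from Lemma~2.5, Lemma~2.8, Lemma~2.9, and Proposition~2.7,'' which are exactly Lemma~\ref{extension}, Lemma~\ref{Molinet}, Lemma~\ref{xxx}, and the $\mathcal{W}_{bdr}$ estimate you invoke. You have simply written out in full the argument the paper leaves implicit.
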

\noindent {\bf Proof:} It follows from Lemma 2.5, Lemma 2.8, Lemma
2.9, and Proposition 2.7.

\begin{lem}
Assume that  $-1\leq s< 1$, $\frac12 < \alpha \leq 1 $ and $0< b <
\frac12 $. For any $T>0 $, there is a constant $C$  such that
 for any $f\in Y^{1-\alpha, T}_{s,b} $, $$ u=\int ^t_0 W_0 (t-\tau ) f(\tau) d\tau \in {\cal X}_{s,b}^{\alpha ,T}
$$ and satisfies the inequality
\begin{equation} \norm{u}_{  {\cal X} _{s,b}^{\alpha, T } } \leq C \norm{f }_{Y^{1-\alpha, T}_{s,b} }. \label{y-7}
\end{equation}
In addition, there exists a $b^* \in (0, \frac12)$ such that if
$f\in Y^{1-\alpha, T}_{s, b^* }$, then $u$  belongs to the space
${\cal X}_{s,\frac12 }^{\alpha , T} $  and satisfies the bound
\begin{equation} \norm{u}_{  {\cal X} _{s,\frac12 }^{\alpha, T } } \leq C \norm{f }_{Y^{1-\alpha, T}_{s,b^*} }. \label{y-8}
\end{equation}

\end{lem}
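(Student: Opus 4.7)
The plan is to mirror the strategy used for the homogeneous initial-value operator in Lemma 3.5, namely, to port the problem on $(0,L)\times(0,T)$ to the whole plane through the decomposition supplied by Lemma \ref{Forcedext}, apply the Bourgain estimates on $\mathbb{R}$, and then correct on the boundary using Proposition 2.7. To begin, I would use the quotient definition of $Y^{1-\alpha,T}_{s,b}$ to pick an extension $F$ of $f$ to $\mathbb{R}^2$ with $\|F\|_{Y^{1-\alpha}_{s,b}}\le 2\|f\|_{Y^{1-\alpha,T}_{s,b}}$, and, after multiplying by a cutoff $\psi\in C_0^\infty(\mathbb{R})$ that is $\equiv 1$ on $[0,T]$, may assume $F$ is compactly supported in time. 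Lemma \ref{Forcedext} then yields, for $(x,t)\in(0,L)\times(0,T)$,
\[
u(x,t)=q(x,t)-[W_{bdr}(t)\vec{v}](x),\qquad q(x,t)=\int_0^t W_{\mathbb{R}}(t-\tau)F(\cdot,\tau)\,d\tau,
\]
with $\vec{v}=(q(0,\cdot),\,q_x(L,\cdot),\,q_{xx}(L,\cdot))$.

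The whole-line piece is handled by Lemma \ref{Molinet}. That lemma gives $\|\psi q\|_{\mathcal{X}^{\alpha}_{s,b'}}\le C\|F\|_{Y^{1-\alpha}_{s,1-b'}}$ for any $b'\in(0,1]$. Since the hypothesis is $b<1/2$, I can pick $b'\in(1/2,\,1-b)$, which makes $1-b'<1/2$ and $1-b'\ge b$. The monotonicity of the defining weights then gives the two embeddings $X^{\alpha}_{s,b'}\hookrightarrow X^{\alpha}_{s,b}$ (larger $b$-exponent yields a smaller $X$-space) and $Y^{1-\alpha}_{s,b}\hookrightarrow Y^{1-\alpha}_{s,1-b'}$ (larger $b$-exponent yields a larger $Y$-space), hence $\|q\|_{\mathcal{X}^{\alpha}_{s,b}}\le C\|F\|_{Y^{1-\alpha}_{s,b}}$.

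For the boundary piece, the sharp trace estimates of Lemma \ref{xxxx} apply directly since $-1\le s\le 2$ and $0\le b<1/2$: they control $\|q(0,\cdot)\|_{H^{(s+1)/3}}$, $\|q_x(L,\cdot)\|_{H^{s/3}}$ and $\|q_{xx}(L,\cdot)\|_{H^{(s-1)/3}}$ by $\|F\|_{Y^{1-\alpha}_{s,b}}$, so $\|\vec{v}\|_{\mathbb{H}^s(0,T)}\le C\|f\|_{Y^{1-\alpha,T}_{s,b}}$. Proposition 2.7, valid for $0\le b<1/2-s/3$ (a range that contains the given $b<1/2$ because $s\le 0$ is allowed there, and, in the range of interest for Theorem 3.4, $-3/4<s<0$ makes even $b=1/2$ admissible), then converts this into $\|\mathcal{W}_{bdr}(t)\vec{v}\|_{\mathcal{X}^{\alpha,T}_{s,b}}\le C\|f\|_{Y^{1-\alpha,T}_{s,b}}$. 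Summing the two pieces yields (\ref{y-7}).

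For the second claim, the key observation is that Lemma \ref{Molinet} already allows $b=1/2$, so $\|\psi q\|_{\mathcal{X}^{\alpha}_{s,1/2}}\le C\|F\|_{Y^{1-\alpha}_{s,1/2}}$, and for any $b^*\in(0,1/2)$ the embedding $Y^{1-\alpha}_{s,b^*}\hookrightarrow Y^{1-\alpha}_{s,1/2}$ controls the right-hand side by $\|f\|_{Y^{1-\alpha,T}_{s,b^*}}$. The only genuine obstacle is upgrading the boundary correction to the same endpoint regularity, since Proposition 2.7 falls just short of giving $b=1/2$ when $s\ge 0$; this is where I expect the real work to lie. For $s<0$ (the only case actually needed in the proof of Theorem 3.4) the slack $1/2-s/3>1/2$ in Proposition 2.7 already accommodates $b=1/2$, so choosing $b^*$ small enough to absorb a technical $T^{\mu}$-factor from the extension and composition steps completes the argument; for $s\ge 0$ one would have to refine Proposition 2.7 by a direct $L^2_\xi$-analysis near $\xi=0$ of the kernels in (\ref{extWbdr})–(\ref{extWbdr-1}), using the decay tabulated for $\Delta^{+}_{j,m}/\Delta^{+}$ to squeeze out the endpoint bound.
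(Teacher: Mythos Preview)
Your approach is essentially the same as the paper's: the paper's proof is the single line ``It follows from Lemma 2.6, Lemma 2.8, Lemma 2.10, and Proposition 2.7,'' which are precisely Lemma~\ref{Forcedext}, Lemma~\ref{Molinet}, Lemma~\ref{xxxx}, and the boundary estimate you invoke. Your explicit handling of the $X$/$Y$ embeddings and the caveat about the restriction $s\le 0$ in Proposition~2.7 are details the paper leaves implicit, but the skeleton is identical.
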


\noindent {\bf Proof:}  It follows from Lemma 2.6, Lemma 2.8, Lemma
2.10, and Proposition 2.7.

\begin{lem}
Given $T>0$,  $s> -\frac34 $, there exist $b=b(s) < \frac12  $,
$\alpha = \alpha (s) >\frac12  $  and $C, \, \mu >0$ such that
\begin{equation}
 \norm{\partial _x (uv)}_{Y_{s,b }^{\alpha , T} } \leq C T^{\mu }  \norm{u}_{{\cal X}_{s,b}^{\alpha, T } }
 \norm{v}_{{\cal X}_{s,b}^{\alpha ,T } }\label{M-1}
\end{equation}
for any $u, \, v\in {\cal X}_{s,b}^{\alpha , T} $.
\end{lem}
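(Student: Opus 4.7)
The strategy is to reduce the bilinear estimate on the restricted spaces to the whole-line bilinear estimate, Lemma 2.11, via an extension-and-cutoff argument. Given $u, v \in {\cal X}_{s,b}^{\alpha, T}$, by the definition of the quotient norms on $X_{s,b}^{\alpha, T}$ one can select extensions $\tilde{u}, \tilde{v}$ defined on $\mathbb{R}\times \mathbb{R}$ with $\tilde{u} = u$ and $\tilde{v} = v$ on $(0,L)\times(0,T)$ and
\[ \|\tilde{u}\|_{X_{s,b}^{\alpha}} \leq 2 \|u\|_{{\cal X}_{s,b}^{\alpha,T}}, \qquad \|\tilde{v}\|_{X_{s,b}^{\alpha}} \leq 2 \|v\|_{{\cal X}_{s,b}^{\alpha,T}}. \]
(The $C([0,T];H^s(0,L))$ component of the restricted norm can be extended as well using standard time-extension operators.)

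Next, fix a cutoff $\psi \in C_0^{\infty}(\mathbb{R})$ with $\psi \equiv 1$ on $[0,1]$ and $\operatorname{supp}(\psi) \subset [-1,2]$, and set $\psi_T(t) = \psi(t/T)$, so that $\psi_T \equiv 1$ on $[0,T]$ and is supported in $[-T, 2T]$. Define $u^{*} = \psi_T(t)\, \tilde{u}$ and $v^{*} = \psi_T(t)\, \tilde{v}$. Then $u^*, v^*$ have temporal support contained in $[-T, 2T]$ and still agree with $u, v$ on $(0,L)\times(0,T)$. The central technical step is to verify that multiplication by $\psi_T$ is bounded on $X_{s,b}^{\alpha}$ uniformly in $T \leq 1$ (or with at most polynomial growth in $T$), i.e.,
\[ \|\psi_T \tilde{u}\|_{X_{s,b}^{\alpha}} \leq C\, \|\tilde{u}\|_{X_{s,b}^{\alpha}}, \]
which is a standard property of Bourgain-type spaces when $b < 1/2$; the analogous estimate for $Y_{s,b}^{\alpha}$ (needed later to pass between norms in a localized setting) is also classical and is contained in the references \cite{KPV93b, ColKen02, Holmer06, BSZ06}.

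With $u^{*}, v^{*}$ now compactly supported in $[-2T, 2T]$, Lemma 2.11 applies to give
\[ \|\partial_x(u^{*} v^{*})\|_{Y_{s,b}^{\alpha}} \leq C\, T^{\mu}\, \|u^{*}\|_{X_{s,b}^{\alpha}} \|v^{*}\|_{X_{s,b}^{\alpha}} \leq C\, T^{\mu}\, \|u\|_{{\cal X}_{s,b}^{\alpha,T}} \|v\|_{{\cal X}_{s,b}^{\alpha,T}}. \]
Since $u^{*} v^{*}$ coincides with $uv$ on $(0,L)\times(0,T)$, the function $\partial_x(u^{*} v^{*})$ is a valid extension of $\partial_x(uv)$ to $\mathbb{R}\times\mathbb{R}$, so by the definition of the quotient norm on $Y_{s,b}^{\alpha, T}$ we obtain the desired bound
\[ \|\partial_x(uv)\|_{Y_{s,b}^{\alpha, T}} \leq \|\partial_x(u^{*} v^{*})\|_{Y_{s,b}^{\alpha}} \leq C\, T^{\mu}\, \|u\|_{{\cal X}_{s,b}^{\alpha,T}} \|v\|_{{\cal X}_{s,b}^{\alpha,T}}. \]

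The principal obstacle is the boundedness of multiplication by the temporal cutoff $\psi_T$ on the Bourgain spaces $X_{s,b}^{\alpha}$ and $Y_{s,b}^{\alpha}$ uniformly in $T$; any polynomial loss in $T$ here is absorbed by choosing $b,\alpha$ so that the $T^{\mu}$ gain from Lemma 2.11 dominates. Everything else — extension, restriction, and the passage to the quotient norm — is formal once the whole-line estimate and the multiplier lemma are in hand.
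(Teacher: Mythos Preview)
Your argument is correct and is precisely the standard way to make rigorous the paper's one-line proof, which simply reads ``It follows from Lemma 2.11 directly.'' The extension-cutoff-restriction procedure you outline is exactly what is implicit in passing from the whole-line estimate (Lemma 2.11) to the quotient-norm spaces, so your approach coincides with the paper's, only with the routine details spelled out.
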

\noindent {\bf Proof:} It follows from Lemma 2.11 directly.

\medskip

 Now we at the stage to present of the proof of Theorem 3.4

\medskip
 \noindent
{\bf Proof of Theorem 3.4:}

\medskip
 For given $(\phi,\v{h})\in D_{s,T} $ and  $s\in (-\frac34,0)$, let   $\theta\in (0,1]$ to be determined.
Define  $\Gamma:  {\cal X}_{s,\frac12}^{\alpha, \theta}   \to {\cal
X}_{s,\frac12}^{\alpha, \theta} $ such that
$$\Gamma(\omega) :=  W_0(t)\phi + W_{bdr}(t)\v{h} - \int_0^t W_0(t-\tau)\big(\omega \omega _x)(\tau) d\tau$$
By Lemmas 3.5, Lemma 3.6 and Lemma 3.7, we have

\begin{eqnarray*}
\| \Gamma(\omega)\|_{{\cal X}_{s,\frac12}^{\alpha, \theta}}
&\le& \|  W_0(t)\phi\|_{{\cal X}_{s,\frac12}^{\alpha, \theta}}  + \|W_{bdr}(t)\v{h}\|_{{\cal X}_{s,\frac12}^{\alpha, \theta}} \\
& & + \| \int_0^t W_0(t-\tau)\big(\omega \omega _x)(\tau)\|_{{\cal X}_{s,\frac12}^{\alpha, \theta}} \\
& \le&  C  \|(\phi , \vec{h})\|_{D_{s,T}} + C \theta^{\mu} \|\omega\|_{{\cal X}_{s,\frac12}^{\alpha, \theta}}^2\\
\end{eqnarray*}
Let  $r= 2 C  \|(\phi , \vec{h})\|_{D_{s,T}}$  and the ball
$$B_r: \{w\in \y_{s,b}^{\theta}): \|w\|_{{\cal X}_{s,\frac12}^{\alpha, \theta}}\le r \}$$
\begin{eqnarray*}
\| \Gamma(\omega)\|_{{\cal X}_{s,\frac12}^{\alpha, \theta}} &\le&   r/2 + C\theta^{\mu} r^2\\
&\le& r\left(1+C\theta^{\mu}r\right)\\
&\le&r/2 +r/2 =r.
\end{eqnarray*}
when we select  $T^*= \theta>0$ and $ 2C(T^*)^{\mu}r <1$.\\

Therefore, $$\Gamma(B_r) \subset B_r.$$

Similarly,  taking $v,\omega\in \y_{s,b}^{T^*}$,
\begin{eqnarray*}
\|\Gamma(v)- \Gamma(\omega) \|_{{\cal X}_{s,b}^{\alpha,T^*}}&\le&
C\theta^{\mu}\| v-\omega\|_{{\cal X}_{s,b}^{\alpha,T^*}}   \| v+\omega\|_{{\cal X}_{s,b}^{\alpha,T^*}}\\
&\le&   C\theta^{\mu}\| v-\omega\|_{{\cal X}_{s,b}^{\alpha,T^*}}\left( \| v\|_{{\cal X}_{s,b}^{\alpha,T^*}}
+\| \omega\|_{{\cal X}_{s,b}^{\alpha,T^*}}\right)\\
&\le& 2r C\theta^{\mu}\| v-\omega\|_{\y_{s,b}^{T^*}} \\
&\le& \beta \| v-\omega\|_{{\cal X}_{s,b}^{\alpha,T^*}}
\end{eqnarray*}
with $\beta= 2 C (T^*)^{\mu}r<1$  as we have defined before. Then,
by the contraction mapping theorem, the fixed point $u$ is the
unique solution of (\ref{3.3}).
\section{Concluding remarks}
\setcounter{equation}{0} The focus of our discussion has been the
well-posedness of the initial value problem
    of the KdV equation posed on the finite interval $(0,L)$:

    \begin{equation}\label{4.1}
        \begin{cases}
        u_t+u_x+u_{xxx}+uu_x=0,\qquad    x\in (0,L),  \ t >0,
        \\ u(x,0)=\phi(x), \\
        u(0,t)=h_1(t),\ u_x(L,x)=h_2(t),\ u_{xx}(L,t)=h_3(t).
        \end{cases}
    \end{equation}
    It is considered with the initial data $\phi \in H^s (0,L)$ and
    the boundary data $\vec{h}=(h_1, h_2 , h_3 )$ belongs to the
    space $D_{s,T}:= H^{\frac{s+1}{3}}(0,T)\times H^{\frac{s}{3}}
    (0,T)\times H^{\frac{s-1}{3} } (0, T)$ with $s>-\frac34$. Using
    the approaches developed in \cite{BSZ03FiniteDomain, BSZ06} we have succeeded in showing
    that the IBVP (\ref{4.1}) is locally well-posed in the space
    $H^s(0,L)$ for any $s>-\frac 34$ with $s\ne \frac{2j+1}{2}$,
    $j=0,1,2,\cdots$, which extends and improve the earlier works of Colin and
    Ghidalia \cite{c-g-1, c-g-2, ColGhi01}.

    \smallskip
    However, the well-posedness results presented in
    Theorem 1.1 and Theorem 1.3 are  \emph{conditional} in the sense
    that the uniqueness holds in a stronger Banach space than that
    of $C([0.T]; H^s (0,L))$.  In particular, in the case of $s\geq
    0$,  according to Theorem 1.1, the uniqueness holds in the space
    \[ C([0,T]; H^s (0,L))\cap L^2 (0,T; H^{s+1}(0,L))\]
    rather than in the space $C([0,T]; H^s (0,L))$.  Also when
    $-\frac34 < s< 0$, Theorem 1.3 states that the uniqueness holds
    in the space
    \[ C([0,T]; H^s (0,L))\cap X^{\alpha, T}_{s, \frac12} \]
    which is a stronger subspace of  $C([0,T]; H^s (0,L))$. A question arises
    naturally:

    \medskip
    \emph{Does the uniqueness hold in the space $C([0,T]; H^s
    (0,L))$?}

    \medskip
    If the uniqueness does hold in the space $C([0,T];H^s (0,L))$,
    then the corresponding well-posedness is called \emph{unconditional
    well-posedness. } (The interesting readers are referred to  Bona, Sun and Zhang
    \cite{BSZ04}
    for conditional and unconditional
    well-posedness of evolution equations.)

    By using the usual energy estimate method, one can show  that
    the uniqueness does hold for the IBVP (\ref{4.1}) in the space
    $C([0,T]; H^s (0,L))$ when $s>\frac32$. Thus the IBVP
    (\ref{4.1}) is unconditionally (locally) well-posed in the space
    $H^s(0,L)$ for any $s>\frac32 $ with
    \[ s\ne \frac{2j+1}{2}, \quad j=1,2,\cdots \]
    The following question remains open.

    \medskip
    \noindent
    {\bf Question 4.1:} \emph{Is the IBVP (\ref{4.1}) unconditionally
    well-posed in the space $H^s (0,L)$ for some $s< \frac32$?
}

By contrast, the IBVP
\begin{equation}\label{4.2}
        \begin{cases}
        u_t+u_x+u_{xxx}+uu_x=0,\qquad    x\in (0,L),  \ t >0,
        \\ u(x,0)=\phi(x), \\
        u(0,t)=h_1(t),\ u(L,x)=h_2(t),\ u_{x}(L,t)=h_3(t).
        \end{cases}
    \end{equation}
is known to be \emph{unconditionally well-posed} in the space $H^s
(0,L)$ for any $s>-1$. This is because that the IBVP (\ref{4.2}) is
known to  be globally well-posed in the space $H^s (0,L)$ for any
$s\geq 0$. In particular, its classical solutions exist globally.
However, the IBVP (\ref{4.1}) is only known to be locally
well-posed. Whether it is globally well-posed is still an open
question.

\medskip
\noindent {\bf Question 4.2:} \emph{Is the IBVP (\ref{4.1}) globally
well-posed in the space $H^s (0,L)$ for some $s\geq 0$?}

\medskip
This is the same  question asked  earlier by Colin and Ghidalia
\cite{ColGhi01}. They showed that  that if $\phi \in H^1 (0,L)$ and
$h_j \in C^1 (\r ^+)$, $j=1,2,3$ are small enough, then the
corresponding solution  $u$ of (\ref{4.1}) exists globally:
\[ u\in L^{\infty} (\r^+; H^1 (0,L)).\]

\medskip
Recently, Rivas, Usman and Zhang \cite{ruz} showed  that the
solutions of the IBVP (\ref{4.1}) exist globally (in time) in
    the space $H^s(0,L)$  for any $s\geq 0$ as long as its  auxiliary data
    $(\phi , \vec {h})$ is small in the space $D^s_T$.
    In addition, they  have shown  that those small amplitude
    solutions decay exponentially if their boundary value
    $\vec{h}(t)$ decays exponentially. In particular, those
    solutions satisfying homogenous boundary conditions decay
    exponentially in the space $H^s (0,L)$ if their initial values
    are small in  $H^s (0,L)$.

Note that a positive answer to Question 4.2 leads  to a positive
answer to Question 4.1  using the general approach developed by
Bona, Sun and Zhang \cite{BSZ04} for establishing unconditional
well-posedness of nonlinear evolution equations.

\smallskip
Recently, Bona, Sun and Zhang \cite{bsz-finite} showed that the IBVP
(\ref{4.2}) is locally  (unconditionally) well-posed in the space
$H^s (0,L)$ for any $s>-1$. One of the key steps in their approach
is to transfer the IBVP (\ref{4.2}) of the KdV equation to an
equivalent IBVP of the KdV-Burgers equation.  Precisely,
 let
\[ u(x,t)=e^{2t-x} v (x,t).\]
Then  $u$ is a solution of the IBVP (\ref{4.2}) if and only if $v$
is a solution of the following IBVP of the KdV-Burgers equation
posed on the finite interval $(0,L)$:
\begin{equation}\label{kdv-b}
\begin{cases}
v_t +4v_x -3v_{xx}+v_{xxx}+ e^{2t-x}(vv_x -v^2) =0, \ x\in (0,L), \
t\geq 0, \\ v(x,0)= \phi (x)e^x, \\ v(0,t)=e^{-2t+L}h_1 (t), \quad
V(L,t)= e^{-2t+L}h_2 (t), \\ \quad  v_x(L,t)= e^{-2t+L} h_3(t)+h_1
(t) e^{-2t+L}.
\end{cases}
\end{equation}
Consequently, one can adapt the approach of Molinet \cite{molinet}
in dealing with the pure initial value problems  of the KdV-Burgers
equation posed either on the whole line $\r$ or on a periodic domain
$\mathbb{T}$ to show that the IBVP (\ref{kdv-b}) is locally
well-posed in the space $H^s (0,L)$ for any $s>-1$. However, the
same transformation converts the IBVP¡¡(\ref{4.1}) to the following
IBVP of the KdV-Burgers equation
\begin{equation}\label{kdv-b-1}
\begin{cases}
v_t +4v_x -3v_{xx}+v_{xxx}+ e^{2t-x}(vv_x -v^2) =0, \ x\in (0,L), \
t\geq 0, \\ v(x,0)= \phi (x)e^{x}, \\ v(0,t)=e^{-2t+L}h_1 (t), \quad
v_x(L,t)-v(L,t)= e^{-2t+L} h_2(t), \\ \qquad  v_{xx}
(L,t)-v(L,t)=e^{-2t+L}(2h_2 (t) +h_3 (t)).
\end{cases}
\end{equation}
Note that the boundary conditions of (\ref{kdv-b-1}) are different
from those of (\ref{kdv-b}). That brings a challenge  to show that
the IBVP (\ref{kdv-b-1}) to be locally well-posed  in $H^s (0,L)$
for $s>-1$. The following question thus remains to be open.

\medskip
\noindent {\bf Question 4.3}: \emph{Is  the IBVP well-posed in the
space $H^s (0,L)$ for $-1< s\leq -\frac34?$}

\medskip

Finally we would like to point out that the KdV equation including,
in particular, the IBVP (\ref{4.2}) has been extensively studied in
the past twenty years from control point of view (cf
\cite{KRZ91,RZ93, RZ95, Rosier97, zh, Za, coron, cerpa-1, cerpa-2,
zuazua, pazoto,rosier-z} and the reference therein). The interested
readers are specially referred to Rosier and Zhang \cite{Za-survey}
for a recent survey on this subject. By contrast, the study of the
IBVP (\ref{4.1}) is still widely open. It will be  very interesting
to see if there are any differences between the IBVP (\ref{4.1}) and
the  the IBVP (\ref{4.2}) from control point view.

\bigskip
\textbf{Acknowledgments}. Ivonne Rivas was partially supported by
the Taft Memorial Fund at the University of Cincinnati through
Graduate Dissertation Fellowship. Bing-Yu Zhang was partially
supported  by the Taft Memorial Fund at the University of
Cincinnati. The work was partially conducted while the second author
(IR) and the third author (BZ) were participating  the trimester
program, \emph{Control of Partial Differential Equations and
Applications}, held at the Institut Henri Poincar\'e  (Paris) from
October 1, 2010 to December 18, 2010.  They thank the Institute for
its hospitality  and financial support.

\end{document}